\newcount\Comments
\Comments=1
\pdfoutput=1
\documentclass[letterpaper,10pt]{scrartcl}

\usepackage[utf8]{inputenc}
\usepackage[english]{babel}
\usepackage[automark]{scrlayer-scrpage}
\usepackage{amsmath,amstext,amsbsy,amsopn,amscd,amsxtra,upref,amssymb}
\usepackage{amsfonts,bm}
\usepackage{amsthm}
\usepackage[numbers]{natbib}
\usepackage{color}
\usepackage{graphicx}
\usepackage{sidecap}
\usepackage{multirow}
\usepackage{booktabs}
\usepackage{bm}
\usepackage{fullpage}

\usepackage{tabularx}
\pagestyle{scrheadings}

\usepackage[font=small]{caption}
\usepackage{subfig}

\usepackage{longtable}
\usepackage{rotating}

\usepackage{algorithm}
\usepackage{algorithmicx}
\usepackage[algo2e,ruled]{algorithm2e}
\extrafloats{100}

\usepackage{lineno,hyperref}

\graphicspath{{figures/}}
\DeclareGraphicsExtensions{.pdf,.eps,.png,.jpg,.jpeg}

\DeclareOldFontCommand{\bf}{\normalfont\bfseries}{\mathbf}

\usepackage{bbm}

\newcommand{\bGamma}{{\boldsymbol \Gamma}}
\newcommand{\bA}{{\bf A}}
\newcommand{\bu}{{\boldsymbol u}}
\newcommand{\bv}{{\boldsymbol v}}
\newcommand{\bw}{{\boldsymbol w}}
\newcommand{\bg}{{\boldsymbol{g}}}
\newcommand{\bphi}{{\boldsymbol \phi}}

\newcommand{\KL}{\operatorname{KL}}
\newcommand{\dhell}{\operatorname{d}_{\mathrm{Hell}}}

\newcommand{\bE}{\mathbb{E}}
\newcommand {\R} {{\mathbb{R}}}
\renewcommand {\P} {{\mathcal P}}
\newcommand {\Y} {{\mathcal Y}}
\newcommand {\bx} {{\mathbf{x}}}
\newcommand {\by} {{\mathbf{y}}}
\newcommand{\btheta}{{\boldsymbol \theta}}

\newcommand{\pil}{\pi^{(\ell)}}
\newcommand{\piL}{\pi^{(L)}}

\newcommand{\cml}{c_{\mathrm{ML}}}
\newcommand{\csl}{c_{\mathrm{SL}}}

\usepackage{mathtools}

\DeclarePairedDelimiter{\norm}{\lVert}{\rVert}
\DeclarePairedDelimiter{\parr}{(}{)}

\DeclareMathOperator*{\st}{\,:\,}

\newenvironment{keywords}
   {\begin{trivlist}\item[]{\bfseries\sffamily Keywords:}\ }
   {\end{trivlist}}

\ihead[]{}
\chead[]{}
\ohead[]{}
\ifoot[]{}
\cfoot[]{\pagemark}
\ofoot[]{}

\newtheorem{lemma}{Lemma}
\newtheorem{theorem}{Theorem}

\newtheorem{assumption}{Assumption}
\newtheorem{proposition}{Proposition}
\newtheorem{remark}{Remark}

\ifpdf
\hypersetup{
  pdftitle={Multilevel Stein variational gradient descent with applications to Bayesian inverse problems},
  pdfauthor={Terrence Alsup, Luca Venturi, and Benjamin Peherstorfer}
}
\fi

\title{Multilevel Stein variational gradient descent with applications to Bayesian inverse problems}
\usepackage{times}

\author{Terrence Alsup\thanks{Courant Institute of Mathematical Sciences, New York University  (\texttt{alsup@cims.nyu.edu}, \texttt{venturi@cims.nyu.edu}, \texttt{pehersto@cims.nyu.edu})}
\and Luca Venturi\footnotemark[1] \and Benjamin Peherstorfer\footnotemark[1]}

\date{April 2021}

\begin{document}

\maketitle

\begin{abstract}
This work presents a multilevel variant of Stein variational gradient descent to more efficiently sample from target distributions. The key ingredient is a sequence of distributions with growing fidelity and costs that converges to the target distribution of interest. For example, such a sequence of distributions is given by a hierarchy of ever finer discretization levels of the forward model in Bayesian inverse problems. The proposed multilevel Stein variational gradient descent moves most of the iterations to lower, cheaper levels with the aim of requiring only a few iterations on the higher, more expensive levels when compared to the traditional, single-level Stein variational gradient descent variant that uses the highest-level distribution only. Under certain assumptions, in the mean-field limit, the error of the proposed multilevel Stein method decays by a log factor faster than the error of the single-level counterpart with respect to computational costs. Numerical experiments with Bayesian inverse problems show speedups of more than one order of magnitude of the proposed multilevel Stein method compared to the single-level variant that uses the highest level only.
\end{abstract}

\begin{keywords}
  Monte Carlo, multilevel and multifidelity, particle methods, Bayesian inference
\end{keywords}

\section{Introduction}
\label{sec:Intro}
Sampling from a target distribution $\pi$ is a common task in Bayesian inference. Typically, in machine learning, the (unnormalized) density of the target distribution can be evaluated  to approximately sample from it with Monte Carlo, variational, and particle methods \citep{RobertBook,pmlr-v33-ranganath14,pmlr-v37-rezende15,8588399}.  
We look at a setup that is more common in scientific machine learning and scientific computing, where a sequence of distributions $(\pil)$ is given that converges weakly to a computationally intractable target $\pi$ for increasing level $\ell \to \infty$. Here, intractable means that one cannot numerically evaluate the  (unnormalized) density of $\pi$.  
For example, one finds such a setup in Bayesian inverse problems \citep{S,KS,doi:10.1137/110845598}, where the target $\pi$ corresponds to a posterior distribution that depends on a forward model through the likelihood.
The forward model is typically a system of partial differential equations (PDEs) 
for which only numerical solutions can be computed; increasingly more accurate, more expensive discretizations (e.g., mesh width going to 0) of the forward-model PDEs then give rise to a sequence of distributions $(\pil)$ that converges to $\pi$.

To approximately sample from the target $\pi$, one then selects a level $L$ such that $\piL$ is a sufficiently accurate approximation of $\pi$ and then applies Monte Carlo or particle methods to $\piL$; see, e.g., \citep{S,KS,doi:10.1137/110845598}. 
One challenge of such an approach is that the density of $\piL$ can be computationally expensive to evaluate, because each evaluation of the density entails at least one numerical solve of the PDEs underlying the forward model, which can quickly make sampling from it prohibitively expensive.

\paragraph{Our contributions}
We propose to extend Stein variational gradient descent (SVGD) \citep{NIPS2016_b3ba8f1b} to a multilevel SVGD (MLSVGD) that  leverages the distributions from all levels $\ell = 1, \dots, L$ to more efficiently approximately sample from $\pi$ than traditional, single-level SVGD that uses the distribution $\piL$ on the highest level $L$ only; thus, the proposed MLSVGD builds on the long history of exploiting hierarchies of discretizations in scientific computing (see below for literature review). 
Our contributions are as follows: \textbf{(1)} an analysis that shows the cost complexity of the proposed MLSVGD is lower than the cost complexity of single-level SVGD; \textbf{(2)} a numerical algorithm that builds on an adaptive stopping criterion that can be applied in a black-box way; \textbf{(3)} numerical experiments with Bayesian inverse problems involving nonlinear diffusion-reaction and Euler-Bernoulli beam models that demonstrate that taking into account all levels $\ell = 1, \dots, L$ can lead to more than one order of magnitude  speedup compared to single-level SVGD.

\paragraph{Related work on multilevel methods in scientific computing}
Taking into account various discretizations and approximations of forward models to achieve computational speedups has a long tradition in scientific computing, e.g., multigrid solvers \citep{HackbushMG,doi:10.1137/1.9780898719505}, sparse grid approximations \citep{bungartz_griebel_2004}, multilevel Monte Carlo for estimating statistics \citep{10.1007/3-540-45346-6_5,doi:10.1287/opre.1070.0496,cliffe_multilevel_2011}; and multifidelity methods that leverage low-fidelity models without clear hierarchies \citep{doi:10.1137/16M1082469}. In terms of sampling from distributions, there is work on Markov chain Monte Carlo (MCMC) methods that exploit hierarchies of distributions such as 
multistage MCMC methods \citep{christen_markov_2005,Fox97samplingconductivity},
multilevel Metropolis--Hastings  \citep{doi:10.1137/130915005}; and MCMC methods with importance sampling \citep{Hoang_2013}. 
Then, there are multilevel/multifidelity variational methods, where a transport map (flow) is parametrized a priori; for example, \citep{AP20Context,PM18MultiTM} build on \citep{ELMOSELHY20127815,ParnoMCMCPrecon} and construct the transport maps from a distribution on a lower level and then use it as proposal for Metropolis-Hastings or for importance sampling. 
There are multilevel particle filters  \citep{doi:10.1137/17M1111553} and multilevel sequential Monte Carlo \citep{BESKOS20171417} methods, ensemble Kalman filtering \citep{doi:10.1137/15M100955X},
and extensions to nonlinear filtering using transport \citep{doi:10.1137/15M1038232}; these rely on telescoping sums of correlated differences between successive levels, whereas our approach uses the successive levels as preconditioners for sampling. Probably closest in style to our approach are the multilevel sequential Monte Carlo method \citep{LATZ2018154,doi:10.1137/19M1289601} and the multilevel cross-entropy method \citep{PKW17MFCE} that use distributions obtained on lower levels as starting distributions on higher levels.

\paragraph{Related work on SVGD from machine learning} 

The MLSVGD proposed in this work builds on SVGD introduced by \cite{NIPS2016_b3ba8f1b} and further theoretically analyzed in \citep{NIPS2017_17ed8abe}; extended to consider Newton directions \citep{NEURIPS2018_fdaa09fc}; exploiting geometry \citep{NEURIPS2019_eea5d933}, and other acceleration techniques \citep{pmlr-v97-liu19i}. 
A key building block for us will be recent advances on understanding the convergence properties of SVGD in the infinite particle (mean-field) regime. The work \citep{NIPS2017_17ed8abe,duncan2019geometry} shows the mean-field limit. The work \citep{NonAsym} shows non-asymptotic results. Further, the work \citep{chisqflow}  establishes exponential convergence under certain situations in the mean-field limit that motivates some of our assumptions. 
Another key building block is relating discretization error of the forward model at level $\ell$ to divergence of the corresponding posterior distributions $\pil$ with respect to the intractable target $\pi$, where we build on results by \cite{S} and an inequality involving the Kullback-Leibler (KL) divergence introduced by \cite{MX}.

\section{Preliminaries: Approximating measures with SVGD}

Let $\Theta \subset \R^d$ and $(\Theta, \mathcal{B}(\Theta))$ be a measurable space with $\mathcal{B}(\Theta)$ denoting the Borel $\sigma$-algebra of $\Theta$.  Consider approximating some target measure $\eta$ on $\Theta$ via an empirical measure, i.e., an ensemble of samples (particles); in the following, the distribution $\eta$, and all other distributions that will be considered, admit a density with respect to the Lebesgue measure over $\Theta$.  Moreover, the target distribution has the form $\eta \propto e^{-V}$ with the potential $V$.

\subsection{Approximating measures with SVGD}
\label{sec:SLSVGD:SVGD}

The SVGD method  \citep{NIPS2016_b3ba8f1b} iteratively moves forward an empirical distribution given by an ensemble $\{\btheta^{[i]}_t\}_{i = 1}^N$ from time $t$ to time $t + \delta$ via a map $\bphi_t(\btheta) = \btheta - \delta \bg_t(\btheta)$, where $\delta$ is a step size and $\bg_t : \Theta \to \R^d$ is a vector field.  SVGD chooses $\bg_t$ from a vector-valued reproducing kernel Hilbert space (RKHS) $\mathcal{H}^d$ with kernel $K: \R^d \times \R^d \to \R$ via a functional gradient descent step on the KL divergence (cf.~\eqref{eq:kldiv} in Appendix~\ref{appx:Metrics}). Denote the distribution of the particles at time $t$ as $\mu_t$ and define the functional $J_t(\bg) = \KL((I - \bg)_{\#} \mu_t  || \eta)$, where $(I - \bg)_{\#}\mu_t$ denotes the pushfoward measure. Then, SVGD chooses the gradient by setting $\bg_t = \nabla J_t(\boldsymbol 0)$, where $\boldsymbol 0$ is the zero function. Using the RKHS formulation, there is a closed form expression for $\nabla J_t({\boldsymbol 0})$, so that during the gradient descent the particles evolve according to the ordinary differential equation (ODE)
\begin{equation}
    \dot{\btheta}^{[i]}_t 
    = -\nabla J_t({\boldsymbol 0}) \left(\btheta^{[i]}_t \right) 
    = \bE_{\btheta^{\prime} \sim \mu_t}\left[ K(\btheta^{\prime}, \btheta^{[i]}_t)\nabla \log \eta(\btheta^{\prime}) + \nabla_1 K(\btheta^{\prime}, \btheta^{[i]}_t) \right] \, ,
\label{eq:Prelim:GradDescent}
\end{equation}
where $\nabla_1$ denotes the gradient with respect to the first argument.  In practice, the expectation is approximated using the empirical distribution of the ensemble of particles $\{\btheta^{[i]}_t\}_{i=1}^N$ and the ODE is integrated using the forward Euler method.  Thus, the SVGD update becomes
\begin{equation}
    \btheta^{[i]}_{t+\delta} 
    = \btheta^{[i]}_{t} + \frac{\delta}{N}\left( \sum\nolimits_{j = 1}^N \nabla_1 K(\btheta^{[j]}_t, \btheta_t^{[i]}) + \sum\nolimits_{j = 1}^N K(\btheta^{[j]}_t, \btheta^{[i]}_t) \nabla \log \eta (\btheta^{[j]}_t)  \right) \, .
\label{eq:Prelim:SVGDUpdate}
\end{equation}
In \citep{NIPS2017_17ed8abe}, the distribution of the particles $\{\btheta^{[i]}_t\}_{i=1}^N$ in the limit as $N \to \infty$ is given by the mean-field PDE
\begin{equation}
\partial_t \mu_t(\btheta) = - \nabla \cdot \left(\mu_t(\btheta) \bE_{\btheta^{\prime} \sim \mu_t}\left[ K(\btheta^{\prime}, \btheta)\nabla \log \eta(\btheta^{\prime}) + \nabla_1 K(\btheta^{\prime}, \btheta) \right] \right)\,,
\label{eq:Prelim:ContSVGD}
\end{equation}
with an initial measure $\mu_0$; see also \citep{chisqflow,SVAIS}.  \cite{NIPS2017_17ed8abe} shows that a steady state is reached in the limit $t\to \infty$ and the empirical distribution converges weakly (i.e. in distribution) to the target $\eta$.

\subsection{Approximating intractable target measures with SVGD}
\label{sec:SLSVGD:SVGD_intractable}

Consider now an intractable target distribution $\pi$; in contrast to the measure $\eta$ in Section~\ref{sec:SLSVGD:SVGD}, we can neither evaluate the (unnormalized) density of $\pi$ nor sample from $\pi$ directly.  Thus, the SVGD algorithm cannot directly be applied to $\pi$.  Instead, suppose we have a sequence of distributions $(\pil)_{\ell \ge 1}$ that converges weakly to $\pi$ for $\ell \to \infty$ (note that weak convergence is implied by convergence in the KL divergence) and call $\ell$ the level.  Moreover, we can evaluate the unnormalized density of each $\pil$ with computational costs $c_{\ell}$. Such a setup is common in Bayesian inverse problems; cf.~Section~\ref{sec:Intro}. 

\paragraph{Single-level approximation with SVGD} The aim is deriving a distribution $\mu$ that approximates $\pi$ with accuracy $\epsilon$. To quantify how close the approximation $\mu$ is to the target distribution, we consider the Hellinger distance $\dhell(\cdot, \cdot)$ in the following: 
First, select a level $L \in \mathbb{N}$ such that $\dhell(\piL, \pi) \leq \epsilon/2$.  Then, to approximate $\piL$ with SVGD, derive $\mu$ with accuracy $\dhell(\mu, \piL) \leq \epsilon/2$ from an initial distribution $\mu_0$; the triangle inequality leads to $\dhell(\mu, \pi) \leq \epsilon$. The fact that the Hellinger distance is a metric is important because it  allows us to separate the error due to truncating at level $L$ and the error due to the SVGD approximation of $\piL$; see Appendix~\ref{appx:Metrics} for the definition of the Hellinger distance. 

\paragraph{Computational costs} The costs of such an approach depend on two factors: (1) the costs $c_L$ of evaluating the density $\piL$ on level $L$, which is independent of SVGD, and (2) the costs of SVGD to find $\mu$ from $\piL$ with initial distribution $\mu_0$ to achieve $\dhell(\mu, \piL) \le \epsilon/2$. In the continuous SVGD given by Equation~\eqref{eq:Prelim:GradDescent}, we identify the costs of the approximation $\mu_T$ after integrating up to end time $T$ as 
\begin{equation}
\csl(T) = c_L T\, .
\label{eq:Prelim:CSL}
\end{equation}
We will see that the integration time $T$ depends on the divergence between $\mu_0$ and $\piL$. For the discrete SVGD given by Equation~\eqref{eq:Prelim:SVGDUpdate}, time is replaced with number of iterations and the costs must be multiplied by the number of particles $N$.  

\begin{remark} Although we use the Hellinger distance $\dhell$ in the following, the proposed analysis is also applicable if a different metric is used as long as it can be upper bounded by the KL divergence; see Section~\ref{sec:MLSVGD:SingleLevel} for more details.  Indeed, we make frequent use of the fact that the Hellinger distance can be bounded as
\begin{equation}
    2 \dhell(\rho_1, \rho_2)^2 \le \KL(\rho_1\ ||\ \rho_2) \,
    \label{eq:Prelim:div_rel}
\end{equation}
for two distributions $\rho_1,\rho_2$; see Lemma 2.4 of~\citep{Tsybakov} (note that the definition of Hellinger distance there is scaled by a constant factor $\sqrt{2}$).  The Hellinger distance is also useful because it can be used to bound the bias of a Monte Carlo estimator as shown in~\citep{S}.
\end{remark}

\section{A continuous multilevel Stein variational method and its cost complexity}
\label{sec:MLSVGD}

We propose MLSVGD that leverages the measures $\pi^{(1)}, \dots, \pi^{(L - 1)}$ with the aim to reduce the costs of approximating $\piL$ compared to the traditional, single-level SVGD that uses $\piL$ only.  Our analysis of the proposed MLSVGD method is conducted in the time-continuous and mean-field setting where the SVGD measures satisfy the PDE~\eqref{eq:Prelim:ContSVGD} and the particles satisfy the ODE~\eqref{eq:Prelim:GradDescent}.  A discrete, heuristic, algorithmic formulation follows in Section~\ref{sec:Algorithm} with a numerical comparison to single-level SVGD in Section~\ref{sec:NumericalResults}.

\begin{figure}
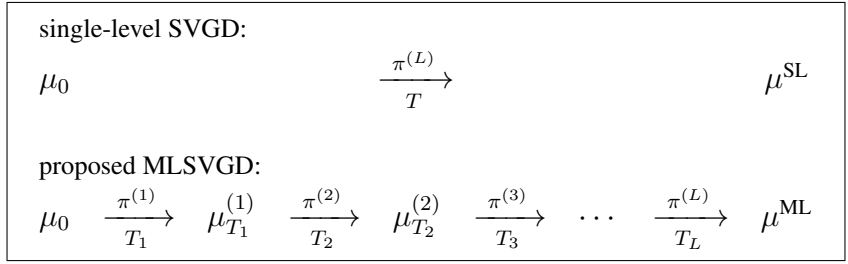

\large
\begin{center}
\fbox{
\begin{tabular}{ccccccccc}
\multicolumn{8}{l}{\normalsize single-level SVGD:}\\
$\mu_0$ & \multicolumn{7}{c}{$\xrightarrow[T]{~~~~~~~~~~~~~~~~~~~~~~~~~~ ~~~~~~~~~~~~~~~~~~~~~~\piL ~~~~~~~~~~~~~~~~~~~~~~~~~~~~~~~~~~~~~~~~~~~~~~~~}$} & $\mu^{\text{SL}}$\\
&&&&&&&&\\
\multicolumn{8}{l}{\normalsize proposed MLSVGD:}\\
$\mu_0$ & $\xrightarrow[T_1]{~~~~\pi^{(1)}~~~~}$ & $\mu_{T_1}^{(1)}$ & $\xrightarrow[T_2]{~~~~\pi^{(2)}~~~~}$ & $\mu_{T_2}^{(2)}$ & $\xrightarrow[T_3]{~~~~\pi^{(3)}~~~~}$ & $\cdots$ & $\xrightarrow[T_L]{~~~~\piL~~~~}$ & $\mu^{\text{ML}}$
\end{tabular}
}
\end{center}
\caption{The proposed MLSVGD leverages a hierarchy of distributions with increasing costs and fidelity with the aim of requiring fewer iterations on the higher, more expensive levels compared to traditional, single-level SVGD that uses the highest-level distribution only.}
\label{fig:MLSVGD}
\end{figure}

\subsection{Continuous MLSVGD}
\label{sec:MLSVGD:MLSVGD}

To describe the proposed MLSVGD, consider the levels $\ell = 1, \dots, L$ and let $\mu_0$ be an initial distribution.  At level $\ell = 1$, we define $\mu_{T_1}^{(1)}$ as the distribution of the continuous SVGD \eqref{eq:Prelim:ContSVGD} at time $T_1$  with target $\pi^{(1)}$ and initial $\mu_0$.  At level $\ell = 2$, we obtain $\mu_{T_2}^{(2)}$ at time $T_2$ with the target $\pi^{(2)}$ and initial distribution $\mu_{T_1}^{(1)}$.  In general, at level $\ell$, we obtain $\mu^{(\ell)}_{T_{\ell}}$ at time $T_{\ell}$ with target $\pil$ and initial distribution $\mu_{T_{\ell - 1}}^{(\ell - 1)}$.  Thus, deriving $\mu^{(\ell)}_{T_{\ell}}$ is an iterative process over the levels $1, \dots, \ell - 1$, depicted in Figure~\ref{fig:MLSVGD}, of first computing $\mu_{T_1}^{(1)}, \dots, \mu_{T_{\ell - 1}}^{(\ell - 1)}$.  The costs of MLSVGD are given by
\begin{equation}
    \cml(T_1, \dots, T_L) = \sum_{\ell = 1}^L c_{\ell} T_{\ell}\,,
\label{eq:MLSVGD:MLSVGD_costs}
\end{equation}
cf.~the costs $\csl(T) = c_L T$ of the single-level SVGD as defined in \eqref{eq:Prelim:CSL}.

\subsection{Assumptions for cost complexity analysis of single-level SVGD and MLSVGD}

We build on the following three assumptions to derive the cost complexity of both traditional single-level SVGD as well as the proposed MLSVGD.  The first assumption is a standard assumption in scientific computing on the cost of evaluating the densities, while the second and third are needed to certify that $\dhell(\mu, \pi) \le \epsilon$.

\begin{assumption}
The costs $c_{\ell}$ of evaluating the (unnormalized) density $\pi^{(\ell)}$ are bounded as
\[
c_{\ell} \leq c_0 s^{\gamma \ell}\,,\qquad \ell \in \mathbb{N}\,,
\]
with constants $c_0, \gamma > 0$ independent of $\ell$ and $s > 1$.
\label{asm:Costs}
\end{assumption}

\begin{assumption}
There exists $\alpha, k_0, k_1 > 0$ independent of $\ell$ such that $\KL(\mu_0 || \pi^{(\ell)}) \leq k_0$ for all $\ell \in \mathbb{N}$ and 
\[
\KL(\pi^{(\ell)} || \pi) \leq k_1 s^{-\alpha \ell}\,,\qquad \ell \in \mathbb{N}\,,
\]
where $s$ is the same constant independent of $\ell$ as in Assumption~\ref{asm:Costs} and $\mu_0$ is the initial distribution.
\label{asm:BiasRate}
\end{assumption}

\begin{assumption}
There exists a rate $\lambda > 0$ such that for any initial distribution $\nu_0$
\[
\KL(\nu_t || \pil) \leq \mathrm e^{-\lambda t} \KL(\nu_0 || \pil)\,,\qquad \ell \in \mathbb{N}\,,
\]
holds, where $\nu_t$ solves the mean-field SVGD equation \eqref{eq:Prelim:ContSVGD} at time $t$.
\label{asm:SVGDRate}
\end{assumption}
\cite{NonAsym} show that Assumption~\ref{asm:SVGDRate} is satisfied if the measures $\pil$ satisfy a Stein log-Sobolev inequality.  \cite{chisqflow} also show that Assumption~\ref{asm:SVGDRate} is satisfied for a specific choice of the kernel $K$. We also note that the exponential convergence rate for the KL divergence appears in the theory for the convergence of Markov processes when the target measure satisfies a log-Sobolev inequality \citep[Theorem~5.2.1]{markovdiff}; however, SVGD approximates the gradient in an RKHS and thus \citep[Theorem~5.2.1]{markovdiff} is not directly applicable.

\subsection{Cost complexity of continuous single-level SVGD}
\label{sec:MLSVGD:SingleLevel}

Consider the single-level SVGD that selects $L$ such that $\dhell(\piL, \pi) \leq \epsilon/2$ and then starts with a $\mu_0$ to find $\mu^{\text{SL}}_T$ that satisfies $\dhell(\mu_T^{\text{SL}}, \piL) \leq \epsilon/2$. For brevity, we write $\mu^{\text{SL}} = \mu^{\text{SL}}_T$.  The following proposition bounds the costs of this single-level SVGD with respect to the tolerance $\epsilon$.

\begin{proposition}
If Assumptions~\ref{asm:Costs}--\ref{asm:SVGDRate} hold, then the costs of continuous single-level SVGD to obtain $\mu^{\text{SL}}$ with
\[
\dhell(\mu^{\mathrm{SL}}, \pi) \leq \epsilon
\]
is bounded as
\begin{equation}
\csl^*(\epsilon) \le \frac{2c_0 s^{\gamma}}{\lambda} \left( \frac{\sqrt{2k_1}}{\epsilon}  \right)^{2\gamma/\alpha} \log \left( \frac{\sqrt{\KL(\mu_0\ ||\ \piL)}}{\sqrt{2}\epsilon} \right) \,.
\label{eq:MLSVGD:SLSVGDCost}
\end{equation}
\label{prop:hificostcomp}
\end{proposition}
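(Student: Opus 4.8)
The plan is to decouple the two error contributions that the single-level construction already isolates: the truncation bias $\dhell(\piL,\pi)$, which is governed by the choice of level $L$ through Assumption~\ref{asm:BiasRate}, and the optimization error $\dhell(\mu^{\mathrm{SL}},\piL)$, which is governed by the integration time $T$ through Assumption~\ref{asm:SVGDRate}. Since the triangle inequality guarantees $\dhell(\mu^{\mathrm{SL}},\pi)\le\epsilon$ once both contributions are at most $\epsilon/2$, and since the cost factorizes as $\csl^*(\epsilon)=c_L T$ with $c_L$ depending only on $L$ (Assumption~\ref{asm:Costs}), it suffices to bound $c_L$ and $T$ separately and multiply. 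Throughout, I would immediately convert each Hellinger requirement into a KL requirement via $2\dhell(\rho_1,\rho_2)^2\le\KL(\rho_1 || \rho_2)$ from \eqref{eq:Prelim:div_rel}, so that the KL-based Assumptions~\ref{asm:BiasRate} and~\ref{asm:SVGDRate} apply directly.

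First I would fix the level. Requiring $\dhell(\piL,\pi)\le\epsilon/2$ and using $\dhell(\piL,\pi)^2\le\tfrac12\KL(\piL || \pi)\le\tfrac12 k_1 s^{-\alpha L}$ from Assumption~\ref{asm:BiasRate}, it is enough to pick the smallest integer $L$ with $s^{-\alpha L/2}\le\epsilon/\sqrt{2k_1}$, that is
\[
L=\left\lceil \frac{2}{\alpha\log s}\log\left(\frac{\sqrt{2k_1}}{\epsilon}\right)\right\rceil .
\]
The ceiling is the one spot demanding care: bounding $L\le \tfrac{2}{\alpha\log s}\log(\sqrt{2k_1}/\epsilon)+1$ and inserting this into Assumption~\ref{asm:Costs} yields
\[
c_L\le c_0 s^{\gamma L}\le c_0 s^{\gamma}\, s^{\frac{2\gamma}{\alpha\log s}\log(\sqrt{2k_1}/\epsilon)}=c_0 s^{\gamma}\left(\frac{\sqrt{2k_1}}{\epsilon}\right)^{2\gamma/\alpha},
\]
so the leading prefactor $s^{\gamma}$ in \eqref{eq:MLSVGD:SLSVGDCost} is exactly the price of rounding $L$ up to an integer, and the exponent $2\gamma/\alpha$ is the product of the cost rate $\gamma$ and the inverse bias rate $1/\alpha$.

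Next I would fix the integration time. Requiring $\dhell(\mu^{\mathrm{SL}}_T,\piL)\le\epsilon/2$ and combining $2\dhell(\mu^{\mathrm{SL}}_T,\piL)^2\le\KL(\mu^{\mathrm{SL}}_T || \piL)$ with the exponential decay $\KL(\mu^{\mathrm{SL}}_T || \piL)\le\mathrm e^{-\lambda T}\KL(\mu_0 || \piL)$ from Assumption~\ref{asm:SVGDRate} (applied with initial distribution $\mu_0$ and target $\piL$), it suffices that $\mathrm e^{-\lambda T}\KL(\mu_0 || \piL)\le\epsilon^2/2$. Solving for $T$ gives an integration time of order $\tfrac{1}{\lambda}\log\bigl(\KL(\mu_0 || \piL)/\epsilon^2\bigr)$, which after absorbing the square into the logarithm reproduces the factor $\tfrac{2}{\lambda}\log\bigl(\sqrt{\KL(\mu_0 || \piL)}/(\sqrt 2\,\epsilon)\bigr)$ of \eqref{eq:MLSVGD:SLSVGDCost} up to the constant inside the logarithm. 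Multiplying this $T$ by the bound on $c_L$ from the previous step then assembles the claimed estimate, with the factor $2/\lambda$ originating from the square in the Hellinger-to-KL conversion together with the decay rate $\lambda$.

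The computation is routine once the two KL conversions are set up; the only genuine obstacles are the bookkeeping of the tolerance-split constants and, above all, correctly accounting for the integer ceiling in the choice of $L$, which introduces the leading factor $s^{\gamma}$ and must be tracked carefully so as not to understate the cost.
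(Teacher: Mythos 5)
Your proposal is correct and follows essentially the same route as the paper's proof: the triangle-inequality split into truncation bias and SVGD optimization error, the Hellinger-to-KL conversion via \eqref{eq:Prelim:div_rel}, the same ceiling choice of $L$ producing the $s^{\gamma}$ prefactor and the $(\sqrt{2k_1}/\epsilon)^{2\gamma/\alpha}$ factor, and the same exponential-decay bound on the integration time $T$. The one discrepancy you flag --- the constant inside the logarithm --- is actually present in the paper's own proof as well, since solving $\mathrm e^{-\lambda T}\KL(\mu_0\,||\,\piL)\le\epsilon^2/2$ exactly gives $T \ge \tfrac{1}{\lambda}\log\bigl(2\KL(\mu_0\,||\,\piL)/\epsilon^2\bigr)$ rather than the stated $\tfrac{1}{\lambda}\log\bigl(\KL(\mu_0\,||\,\piL)/(2\epsilon^2)\bigr)$, so your bookkeeping is, if anything, the more careful of the two.
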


\begin{proof}
By the triangle inequality for the Hellinger distance we have that
\[
    \dhell(\mu^{\text{SL}}, \pi) \le 
    \dhell(\mu^{\text{SL}}, \piL) 
    + \dhell(\piL, \pi) ,
\]
so we will bound both of these terms independently by $\epsilon/2$.  By inequality~\eqref{eq:Prelim:div_rel}, it is sufficient to   bound the KL divergence because
\begin{equation}
    \dhell(\mu^{\text{SL}}, \piL) \le \sqrt{ \frac{\KL(\mu^{\text{SL}}\ ||\ \piL )}{2} },
\end{equation}
and similarly for $\dhell(\piL, \pi)$.  By Assumption~\ref{asm:BiasRate} choose $L$ to be
\begin{equation}
    L = \left\lceil \frac{1}{\alpha} \log_s \left( \frac{2 k_1}{\epsilon^2} \right) \right \rceil \le  \frac{1}{\alpha} \log_s \left( \frac{2 k_1}{\epsilon^2} \right) + 1,
    \label{eq:highfidlevel}
\end{equation}
so that 
\begin{equation}
    \dhell(\piL, \pi) \le \sqrt{ \frac{\KL(\piL\ ||\ \pi)}{2}} \le \sqrt{ \frac{k_1 s^{-\alpha L}}{2} } \le \frac{\epsilon}{2} .
\end{equation}
Now by Assumptions~\ref{asm:SVGDRate} the time needed to integrate with SVGD to achieve $\dhell(\mu^{\text{SL}}, \piL) \leq \epsilon/2$ is
\begin{equation}
    T^*_{\text{SL}} \le \frac{1}{\lambda } \log\left( \frac{\text{KL}(\mu_0\ ||\ \piL)}{2\epsilon^2}  \right) .
\end{equation}
The total cost to integrate until time $T^*_{\text{SL}}$ at level $L$ is thus
\[
\csl^*(\epsilon) = c_0 s^{\gamma L} T_{\mathrm{SL}}^*  \le \frac{2c_0 s^{\gamma}}{\lambda} \left( \frac{\sqrt{2k_1}}{\epsilon}  \right)^{2\gamma/\alpha} \log \left( \frac{\sqrt{\text{KL}(\mu_0\ ||\ \piL)}}{\sqrt{2}\epsilon} \right)\,.
\]
\end{proof}

\paragraph{Discussion of cost complexity of single-level SVGD}
The bound \eqref{eq:MLSVGD:SLSVGDCost} in Proposition~\ref{prop:hificostcomp} shows that if we start with an initial distribution $\mu_0$ that has a large $\KL$ divergence $\KL(\mu_0|| \piL)$ with respect to $\piL$, then we will need to integrate for a long time with SVGD to reach our tolerance. The proposed MLSVGD is aiming to avoid the long time integration by starting the integration at the highest level $L$ with good initial distributions found on the cheaper, lower levels $\ell = 1, \dots, L - 1$ that are closer to $\piL$ in the KL divergence than $\mu_0$.

\subsection{Cost complexity of continuous MLSVGD}

Consider now the MLSVGD approach of Section~\ref{sec:MLSVGD:MLSVGD}.  We need to make one additional assumption compared to the single-level SVGD regarding the KL divergence between consecutive measures $\pil$ and $\pi^{(\ell-1)}$ that will allow us to chain them together as in Figure~\ref{fig:MLSVGD}. 
\begin{assumption}
There exists a constant $k_2 > 0$ independent of $\ell$ such that $\KL(\pi^{(\ell - 1)} || \pil) \leq k_2 s^{-\alpha \ell}$\,, where $\alpha$ is the same rate as in Assumption~\ref{asm:BiasRate}.
\label{asm:MLBiasRate}
\end{assumption}
The key result is to use a triangle-\emph{like} inequality as in Appendix~\ref{appdx:kltriangle} to decompose the KL divergence.  In particular,
\begin{equation}
    \KL\left(\mu^{(\ell-1)}_{T_{\ell-1}}\ ||\ \pil\right) = \KL\left(\mu^{(\ell-1)}_{T_{\ell-1}}\ ||\ \pi^{(\ell - 1)}\right) 
    + \KL\left(\pi^{(\ell-1)}\ ||\ \pil\right) + R_{\ell}
    \label{eq:kltriangle}
\end{equation}
with the remainder $R_{\ell}$ given by
\begin{equation}
    R_{\ell} = \int_{\R^d} \left(\mu^{(\ell-1)}_{T_{\ell-1}}(\btheta) - \pi^{(\ell-1)}(\btheta) \right) \log\left( \frac{\pi^{(\ell-1)}(\btheta)}{\pil(\btheta)}  \right)\ \mathrm{d}\btheta  .
    \label{eq:ConstCL}
\end{equation}
Because $\pil$ converges to $\pi$, we have that $\pi^{(\ell-1)}/\pil \to 1$ pointwise and hence $\log\left( \pi^{(\ell-1)}/{\pil}\right) \to 0$.  Moreover, $\mu^{(\ell)}_{T_{\ell}} \to \pi^{(\ell)}$ as $T_{\ell} \to \infty$.  Thus, $R_{\ell} \to 0$.  In particular $R_{\ell}$ is a bounded sequence meaning that there is some constant $R \ge R_{\ell}$ for all $\ell$. 
The following proposition give bounds on the costs of MLSVGD. The later Proposition~\ref{prop:MLSVGD:CostsWithConvergentC} will give a faster decaying bound on the costs if $R_{\ell}$ goes to zero with a known rate, as in our Bayesian inverse problems in Section~\ref{sec:BayesianInverseProblems}.

\begin{proposition}
If Assumptions~\ref{asm:Costs}--\ref{asm:MLBiasRate} hold, then continuous MLSVGD gives $\mu^{\text{ML}}$ with $\dhell(\mu^{\text{ML}}, \pi) \leq \epsilon$ with costs bounded as
\[
\cml^*(\epsilon) \le \frac{2 c_0 s^{2\gamma}}{\lambda \gamma \log(s)} \left( \frac{\sqrt{2k_1}}{\epsilon}  \right)^{2\gamma/\alpha} \log\left( \frac{\sqrt{\epsilon^2 + 2(k_2 + R )}}{\epsilon} \right) \,,
\]
where $R$ bounds \eqref{eq:ConstCL}.
\label{prop:mlcostcomp}
\end{proposition}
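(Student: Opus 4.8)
The plan is to mirror the proof of Proposition~\ref{prop:hificostcomp}, splitting the error with the Hellinger triangle inequality into $\dhell(\mu^{\text{ML}},\piL)+\dhell(\piL,\pi)$ and controlling the two pieces separately. I would select the top level $L$ exactly as in \eqref{eq:highfidlevel}, so that Assumption~\ref{asm:BiasRate} gives $\dhell(\piL,\pi)\le\epsilon/2$ and simultaneously $s^{\gamma L}\le s^{\gamma}(\sqrt{2k_1}/\epsilon)^{2\gamma/\alpha}$. It then remains to certify $\dhell(\mu^{\text{ML}},\piL)\le\epsilon/2$, which by \eqref{eq:Prelim:div_rel} reduces to showing $\KL(\mu^{(L)}_{T_L}\,||\,\piL)\le\epsilon^2/2$.

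The core of the argument is a recursion over the levels. Writing $a_\ell=\KL(\mu^{(\ell)}_{T_\ell}\,||\,\pil)$, Assumption~\ref{asm:SVGDRate} gives $a_\ell\le e^{-\lambda T_\ell}\KL(\mu^{(\ell-1)}_{T_{\ell-1}}\,||\,\pil)$, and the triangle-like identity \eqref{eq:kltriangle} together with Assumption~\ref{asm:MLBiasRate} and the uniform remainder bound $R_\ell\le R$ from \eqref{eq:ConstCL} yields $\KL(\mu^{(\ell-1)}_{T_{\ell-1}}\,||\,\pil)\le a_{\ell-1}+k_2 s^{-\alpha\ell}+R$. I would then prove by induction that $a_\ell\le\epsilon^2/2$ at every level. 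Using $s^{-\alpha\ell}\le 1$, the initial divergence seen at level $\ell$ is at most $\epsilon^2/2+k_2+R$, so by Assumption~\ref{asm:SVGDRate} the constant integration time
\[
T^\ast=\frac{2}{\lambda}\log\!\left(\frac{\sqrt{\epsilon^2+2(k_2+R)}}{\epsilon}\right)
\]
drives $a_\ell$ back down to $\epsilon^2/2$, closing the induction. Taking $T_\ell\le T^\ast$ for all $\ell$ gives $a_L\le\epsilon^2/2$ as required.

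Finally I would sum the costs. By \eqref{eq:MLSVGD:MLSVGD_costs} and Assumption~\ref{asm:Costs}, $\cml^\ast(\epsilon)\le T^\ast\sum_{\ell=1}^{L}c_0 s^{\gamma\ell}\le T^\ast c_0\,s^{\gamma}(s^{\gamma L}-1)/(s^{\gamma}-1)$; inserting the bound on $s^{\gamma L}$ from the choice of $L$ produces the factor $s^{2\gamma}(\sqrt{2k_1}/\epsilon)^{2\gamma/\alpha}/(s^{\gamma}-1)$, and the elementary convexity estimate $s^{\gamma}-1=e^{\gamma\log s}-1\ge\gamma\log s$ converts the denominator into $\gamma\log(s)$, matching the stated constant $2c_0 s^{2\gamma}/(\lambda\gamma\log s)$.

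The main obstacle I anticipate is the induction that delivers the level-independent time $T^\ast$: this is where the whole multilevel gain lives, and it hinges on the non-symmetric decomposition \eqref{eq:kltriangle} being genuinely usable, i.e.\ on $R_\ell$ being uniformly bounded so that the initial divergence at each level stays at the scale $k_2+R$ rather than growing. The base case $\ell=1$ also needs attention, since there the initial divergence is governed by $\KL(\mu_0\,||\,\pi^{(1)})\le k_0$ from Assumption~\ref{asm:BiasRate} rather than by the inter-level gap; one must check that this cheapest level can be solved within $T^\ast$ (its cost being lower order and absorbed into the geometric sum). The remaining steps --- the geometric summation and the convexity bound --- are routine.
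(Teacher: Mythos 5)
Your proposal is correct and follows essentially the same route as the paper's proof: the same choice of $L$, the same recursion built from \eqref{eq:kltriangle} with the target $\KL(\mu^{(\ell)}_{T_\ell}\,||\,\pil)\le\epsilon^2/2$ held fixed across levels, and the same level-independent integration time; the only cosmetic difference is that you close the geometric sum with $s^\gamma-1\ge\gamma\log s$ where the paper compares to an integral, yielding the identical constant. Your flagged concern about the base case $\ell=1$ (where the initial divergence is $\KL(\mu_0\,||\,\pi^{(1)})\le k_0$ rather than $\epsilon^2/2+k_2+R$) is well taken --- the paper's own proof glosses over it --- but as you note its cost is lower order and does not affect the stated bound asymptotically.
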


\begin{proof}
As in Equation~\eqref{eq:highfidlevel} in the proof of Proposition~\ref{prop:hificostcomp} we select the level $L$ as
\begin{equation}
    L = \left\lceil \frac{1}{\alpha} \log_s \left( \frac{2 k_1}{\epsilon^2} \right) \right\rceil \le \frac{1}{\alpha} \log_s \left( \frac{2 k_1}{\epsilon^2} \right) + 1,
\end{equation}
so that $\dhell(\piL, \pi) \le \epsilon/2$.  The total cost for the continuous MLSVGD is
\begin{equation}
    \cml^*(\epsilon) = \sum_{\ell=1}^L c_0 s^{\gamma \ell} T_{\ell}\, ,
\end{equation}
where it remains to choose the integration times $T_{\ell}$ at each level.  To do this we balance the KL divergence of the SVGD approximation with the KL divergence due to the fidelity.  By Equation~\eqref{eq:kltriangle}, we have
\begin{equation}
\KL(\mu^{(\ell)}_{T_{\ell}}\ ||\ \pi^{(\ell)}) \le e^{-\lambda  T_{\ell} }
\left( \KL\left(\mu^{(\ell-1)}_{T_{\ell-1}}\ ||\ \pi^{(\ell-1)} \right) 
+  \KL\left(\pi^{(\ell - 1)}\ ||\ \pi^{(\ell)} \right) + R_{\ell}  \right)\, ,
\label{eq:recursive}
\end{equation}
giving a recursive bound on the KL divergence in terms of the KL divergence at the previous level.  At each level $\ell$ choose the integration time $T_{\ell}$ so that 
\begin{equation}
    \KL\left(\mu^{(\ell)}_{T_{\ell}}\ ||\ \pi^{(\ell)}\right) \le \frac{\epsilon^2}{2} 
    \label{eq:epsiloneq}
\end{equation}
is satisfied.  In particular, at the final level $L$ we will have that $\KL(\mu^{\text{ML}}\ ||\ \piL) \le \epsilon^2/2$ and hence $\dhell(\mu^{\text{ML}}\ ||\ \piL) \le \epsilon/2$ as desired.  By choosing $T_{\ell}$ so that this is satisfied at every level we have from Equation~\eqref{eq:recursive} that
\begin{equation}
    \KL\left(\mu^{(\ell)}_{T_{
    \ell}}\ ||\ \pi^{(\ell)}\right) 
    \le e^{-\lambda  T_{\ell} }\left( \frac{\epsilon^2}{2} +  \KL(\pi^{(\ell - 1)}\ ||\ \pi^{(\ell)} ) + R_{\ell}  \right) \le \frac{\epsilon^2}{2}\, .
\end{equation}
Thus, we choose $T_{\ell}$ sequentially so that Equation~\eqref{eq:epsiloneq} is always satisfied.  As a result, the integration time $T_{\ell}$ needed at each level $\ell$ is bounded by
\begin{equation}
    T_{\ell} \le \frac{1}{\lambda}\log\left( 1 + \frac{2(\KL(\pi^{(\ell - 1)}\ ||\ \pi^{(\ell)}) + R_{\ell} )}{\epsilon^2} \right)\, .
\end{equation}
Finally, the total cost can be bounded by
\begin{equation}
    \cml^*(\epsilon) \le \sum_{\ell=1}^L \frac{c_0}{\lambda} s^{\gamma \ell} \log\left( 1 + \frac{2(\KL(\pi^{(\ell - 1)}\ ||\ \pi^{(\ell)}) + R_{\ell} )}{\epsilon^2} \right)\, .
\end{equation}
We now use the fact that $R_{\ell} \le R$ and $\KL(\pi^{(\ell - 1)}\ ||\ \pi^{(\ell)}) \le k_2$ to obtain 
\begin{equation}
    \cml^*(\epsilon) \le \sum_{\ell=1}^L \frac{c_0}{\lambda} s^{\gamma \ell} \log\left( 1 + \frac{2(k_2 + R )}{\epsilon^2} \right)\, .
\end{equation}
Since the terms in this sum are increasing, we can upper bound the cost further by switching to an integral
\begin{equation}
\begin{split}
    \cml^*(\epsilon) &\le \int_0^{L+1}  \frac{c_0}{\lambda} s^{\gamma x} \log\left( 1 + \frac{2(k_2 + R )}{\epsilon^2} \right)\ \mathrm{d}x\\
    &= \frac{c_0}{\lambda \gamma \log(s)} \log\left( 1 + \frac{2(k_2 + R )}{\epsilon^2} \right) s^{\gamma(L+1)} \\
    &\le \frac{2 c_0 s^{2\gamma}}{\lambda \gamma \log(s)} \left( \frac{\sqrt{2k_1}}{\epsilon}  \right)^{2\gamma/\alpha} \log\left( \frac{\sqrt{\epsilon^2 + 2(k_2 + R )}}{\epsilon} \right)\, .
\end{split}
\end{equation}
\end{proof}

We now consider the case where the remainder term behaves as $R_{\ell} \lesssim s^{-\alpha \ell}$, which allows us to make a more efficient choice when selecting the integration time $T_{\ell}$ at each level. In particular, it allows us to set $T_{\ell}$ such that 
\begin{equation}
	\KL(\mu^{(\ell)}_{T_{\ell}}\ ||\ \pi^{(\ell)}) \sim s^{-\alpha \ell} \, ,
\end{equation}
which leads to the following proposition that shows an improved cost complexity compared to Proposition~\ref{prop:mlcostcomp}. 

\begin{proposition}
If Assumptions~\ref{asm:Costs}--\ref{asm:MLBiasRate} hold and $R_{\ell} \leq k_3 s^{-\alpha \ell}$, then the costs of continuous MLSVGD to have \\$\dhell(\mu^{\mathrm{ML}},\ \pi) \le \epsilon$ can be bounded as
\begin{equation}
    \cml^*(\epsilon) \le \frac{c_0 s^{2 \gamma}}{\lambda \gamma \log(s)} \log\left( s^{\alpha} + \frac{k_2 + k_3}{k_1} \right) \left( \frac{\sqrt{2k_1}}{\epsilon}  \right)^{2\gamma/\alpha}\, .
    \label{eq:MLSVGD:FastC}
\end{equation}
\label{prop:MLSVGD:CostsWithConvergentC}
\end{proposition}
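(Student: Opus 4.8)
The plan is to reuse the skeleton of the proof of Proposition~\ref{prop:mlcostcomp} --- the same choice of top level $L$ from \eqref{eq:highfidlevel} and the same recursive bound \eqref{eq:recursive} --- but to replace the \emph{uniform} per-level target $\KL(\mu^{(\ell)}_{T_\ell}\,||\,\pil) \le \epsilon^2/2$ used there by a \emph{geometrically decaying} target
\[
\KL\!\left(\mu^{(\ell)}_{T_\ell}\,||\,\pil\right) \le k_1 s^{-\alpha \ell}, \qquad \ell = 1, \dots, L.
\]
This target is the natural one: at the top level it gives $k_1 s^{-\alpha L} \le \epsilon^2/2$ by the choice of $L$ in \eqref{eq:highfidlevel}, so that $\dhell(\mu^{\mathrm{ML}}, \piL) \le \epsilon/2$ and hence $\dhell(\mu^{\mathrm{ML}}, \pi) \le \epsilon$ exactly as before; and it shrinks at the same rate $s^{-\alpha\ell}$ at which the fidelity gap $\KL(\pi^{(\ell-1)}\,||\,\pil) \le k_2 s^{-\alpha\ell}$ (Assumption~\ref{asm:MLBiasRate}) and the remainder $R_\ell \le k_3 s^{-\alpha\ell}$ (the new hypothesis) decay. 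It is precisely this matching of rates that will prevent the integration times from growing with $\ell$.

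The core step is an induction over the levels. Assuming the target holds at level $\ell - 1$, I would substitute the induction hypothesis together with Assumption~\ref{asm:MLBiasRate} and $R_\ell \le k_3 s^{-\alpha\ell}$ into the recursion \eqref{eq:recursive}:
\[
\KL\!\left(\mu^{(\ell)}_{T_\ell}\,||\,\pil\right) \le e^{-\lambda T_\ell}\left( k_1 s^{-\alpha(\ell-1)} + k_2 s^{-\alpha\ell} + k_3 s^{-\alpha\ell}\right) = e^{-\lambda T_\ell}\, s^{-\alpha\ell}\left( k_1 s^{\alpha} + k_2 + k_3 \right).
\]
Requiring the right-hand side to be at most $k_1 s^{-\alpha\ell}$ closes the induction and, crucially, yields a bound on the integration time that is \emph{independent of $\ell$}, namely
\[
T_\ell \le T^* := \frac{1}{\lambda}\log\!\left( s^{\alpha} + \frac{k_2 + k_3}{k_1}\right).
\]
This is the whole point of the improvement: whereas in Proposition~\ref{prop:mlcostcomp} each $T_\ell$ was pinned to reduce a fixed added error down to $\epsilon^2/2$ and therefore grew, here the constant target-to-error ratio makes every $T_\ell$ bounded by the same constant $T^*$.

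It then remains to sum the costs and to dispatch the base case. The total cost is $\cml^*(\epsilon) = \sum_{\ell=1}^L c_0 s^{\gamma\ell} T_\ell \le T^* c_0 \sum_{\ell=1}^L s^{\gamma\ell}$, and the geometric sum is bounded by the integral $\int_0^{L+1} s^{\gamma x}\,\mathrm d x \le s^{\gamma(L+1)}/(\gamma \log s)$ exactly as in Proposition~\ref{prop:mlcostcomp}; inserting $s^{\gamma(L+1)} \le s^{2\gamma}(\sqrt{2k_1}/\epsilon)^{2\gamma/\alpha}$ reproduces the stated bound \eqref{eq:MLSVGD:FastC}. The step needing the most care --- and the one I would treat as the main obstacle --- is the base level $\ell = 1$, where there is no predecessor $\pi^{(0)}$ and the recursion \eqref{eq:recursive} does not apply; instead one integrates directly from $\mu_0$ and uses the uniform bound $\KL(\mu_0\,||\,\pi^{(1)}) \le k_0$ from Assumption~\ref{asm:BiasRate} to get $T_1 \le \lambda^{-1}\log(k_0 s^\alpha / k_1)$. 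One must check that this $T_1$ is compatible with the uniform constant $T^*$ (e.g., it is subsumed when $k_0 \le k_1 s^\alpha + k_2 + k_3$, and otherwise its contribution $c_0 s^\gamma T_1$ is an $\epsilon$-independent constant that does not affect the leading $\epsilon^{-2\gamma/\alpha}$ scaling). Everything else --- the factoring of $s^{-\alpha\ell}$, the geometric sum, and the algebra matching the final constant --- is routine.
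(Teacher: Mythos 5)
Your proposal is correct and follows essentially the same route as the paper's proof: the paper likewise replaces the uniform per-level target by $\epsilon_\ell^2/2 = k_1 s^{-\alpha\ell}$, obtains the same $\ell$-independent integration-time bound $T_\ell \le \lambda^{-1}\log\bigl(s^{\alpha} + (k_2+k_3)/k_1\bigr)$, and bounds the resulting geometric cost sum by the same integral comparison before plugging in the choice of $L$. Your explicit handling of the base level $\ell = 1$ via $\KL(\mu_0\,||\,\pi^{(1)}) \le k_0$ is a detail the paper glosses over, and you correctly note that it contributes at most an $\epsilon$-independent constant that does not affect the leading $\epsilon^{-2\gamma/\alpha}$ scaling.
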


\begin{proof}
Starting from Equation~\eqref{eq:recursive} in the proof of Proposition~\ref{prop:mlcostcomp} change $\epsilon$ to instead be
\begin{equation}
    \epsilon_{\ell} = \sqrt{2 k_1} s^{-\alpha \ell / 2}\, 
\end{equation}
at each level $\ell$.  By Assumption~\ref{asm:BiasRate} we know that $L$ is chosen so that
\begin{equation}
  \epsilon_L^2 =  2k_1 s^{-\alpha L} \le \epsilon^2\, ,
\end{equation}
so that $\epsilon_L \le \epsilon$.  Plugging in this choice gives that the integration times needed are
\begin{equation}
    T_{\ell}^* \le \frac{1}{\lambda} \log\left( s^{\alpha} + \frac{\KL(\pi^{(\ell-1)}\ ||\ \pi^{(\ell)}) + R_{\ell}}{k_1 s^{-\alpha \ell}} \right)\, .
\end{equation}
By Assumption~\ref{asm:MLBiasRate} and the assumption in the proposition, we have that
\begin{equation}
    T_{\ell}^* \le  \frac{1}{\lambda} \log\left( s^{\alpha} + \frac{k_2 + k_3}{k_1} \right)\, ,
\end{equation}
so that the integration time is fixed at each level.  The cost is now bounded by
\begin{equation}
    \cml^*(\epsilon) \le \sum_{\ell=1}^L \frac{c_0}{\lambda} s^{\gamma \ell} \log\left( s^{\alpha} + \frac{k_2 + k_3}{k_1} \right)\, .
\end{equation}
Since the terms in the sum are increasing, we can further bound this with an integral :
\begin{equation}
    \cml^*(\epsilon) \le \sum_{\ell=1}^L \frac{c_0}{\lambda} s^{\gamma \ell} \log\left( s^{\alpha} + \frac{k_2 + k_3}{k_1} \right) 
    \le \int_1^{L+1} \frac{c_0}{\lambda} s^{\gamma x} \log\left( s^{\alpha} + \frac{k_2 + k_3}{k_1} \right)\ \mathrm{d}x\, .
\end{equation}
Computing the integral gives
\begin{equation}
    \cml^*(\epsilon) \le \frac{c_0 s^{2\gamma}}{\lambda \gamma \log(s)} \log\left( s^{\alpha} + \frac{k_2 + k_3}{k_1} \right) s^{\gamma L}\, .
\end{equation}
Finally, by plugging in $L$ we obtain
\begin{equation}
    \cml^*(\epsilon) \le  \frac{c_0 s^{2\gamma}}{\lambda \gamma \log(s)} \log\left( s^{\alpha} + \frac{k_2 + k_3}{k_1} \right) \left( \frac{\sqrt{2k_1}}{\epsilon}  \right)^{2\gamma/\alpha}\, .
\end{equation}
\end{proof}

\paragraph{Discussion of cost complexity of MLSVGD} 
Looking at the single-level SVGD and MLSVGD cost bounds from Propositions~\ref{prop:hificostcomp} and~\ref{prop:MLSVGD:CostsWithConvergentC}, respectively, we note two major differences.  The first is that there is no $\log \epsilon^{-1}$ term in the cost bound \eqref{eq:MLSVGD:FastC} of Proposition~\ref{prop:MLSVGD:CostsWithConvergentC} and thus MLSVGD achieves a cost complexity that grows by $\log \epsilon^{-1}$ slower than the cost complexity of single-level SVGD as $\epsilon \to 0$.  Moreover, whenever $\epsilon \to 0$, we now have a fixed integration time at each level $\ell$ as opposed to requiring an increasing number of iterations as the level goes to infinity as in the single-level case.  The second notable difference is that the constant $k_0$, which depends on the KL divergence from the initial distribution $\mu_0$ and the target $\pi$, does not appear in~\eqref{eq:MLSVGD:FastC}.  Instead the bound~\eqref{eq:MLSVGD:FastC} depends on the constant $k_2$ from Assumption~\ref{asm:MLBiasRate}, which depends only on the KL divergence between two consecutive levels.  Thus, if the KL divergence between consecutive levels is low, then the previous level serves as a good preconditioner for the next level leading to reduced costs.

\begin{remark}
    The order $\log \epsilon^{-1}$ comes from the exponential decay rate of the KL divergence for SVGD in Assumption~\ref{asm:SVGDRate}. If the assumption is violated and, for example, the KL divergence decays only algebraically, then we expect the speedup to be on the order of $\epsilon^{-\beta}$ for some constant $\beta > 0$. This is further supported by our numerical results that indicate that MLSVGD obtains speedups even if SVGD converges slower than in Assumption~\ref{asm:SVGDRate}. We leave the detailed analysis of this to future work. 
    \label{rm:Rate}
\end{remark}

\section{MLSVGD for Bayesian inverse problems}
\label{sec:BayesianInverseProblems}

Typically, in Bayesian inverse problems in scientific computing, one is interested in inferring an unknown quantity $\btheta$ from some noisy observed data $\by = G(\btheta^*) + {\boldsymbol e}$ with $G$ denoting the parameter-to-observable map and ${\boldsymbol e}$ being the noise; see, e.g., \citep{S,KS,doi:10.1137/110845598}. Let $\pi_0$ be the prior and consider zero-mean Gaussian noise with covariance $\bGamma$, then the posterior is given by
\begin{equation}
    \pi( \btheta ) = \frac{1}{Z} \exp\left( -\frac{1}{2}\| \by - G(\btheta) \|_{\bGamma^{-1}}^2 \right) \pi_0( \btheta )
    \label{eq:posterior}\, ,
\end{equation}
with the normalizing constant 
\begin{equation}
    Z = \int_{\Theta} \exp\left( -\frac{1}{2}\| \by - G(\btheta)\|_{\bGamma^{-1}}^2 \right) \pi_0(\btheta) \ \mathrm{d}\btheta\,,
\label{eq:normalizing}
\end{equation}
where $\|\bu\|_{\bGamma^{-1}} = \langle \bGamma^{-1}\bu, \bu \rangle$.  Now let $(G_{\ell})_{\ell \ge 1}$ denote a sequence of approximations to the parameter-to-observable map $G$, e.g., given by finite-difference or finite-element discretizations of the PDEs underlying $G$, and define $\pi^{(\ell)}$ by replacing $G$ with $G_{\ell}$ and define $Z_{\ell}$ similarly.  The next two assumptions will be sufficient to apply our results from Section~\ref{sec:MLSVGD}.

\begin{assumption}[Model error]
There is a function $\psi : \mathbb{N} \to (0,\infty)$, with $\psi(\ell) \to 0$ as $\ell \to \infty$, such that
\begin{equation}
	\|G(\btheta) - G_{\ell}(\btheta)\|_{L^2(\pi_0)} \le \psi(\ell) \, ,
\label{eq:modelerror}
\end{equation}
where the $\|\cdot\|_{L^2(\pi_0)}$ is  the $L^2$ norm over $\pi_0$; cf.~\eqref{eq:appx:L2Norm} in Appendix~\ref{appx:Metrics}.
\label{asm:modelerror}
\end{assumption}

\begin{assumption}
There exists a constant $b_3 > 0$ independent of $\ell$ such that
\begin{equation}
    \mu^{(\ell)}_{T_{\ell}}(\btheta) \le b_3 \pi_0(\btheta)
\end{equation}
for all $\ell \ge 1$.
\label{asm:svgdpdfbdd}
\end{assumption}

The next theorem shows that if Assumptions~\ref{asm:Costs},~\ref{asm:SVGDRate},~\ref{asm:modelerror}, and~\ref{asm:svgdpdfbdd} are satisfied, then our cost complexity results derived for MLSVGD in Section~\ref{sec:MLSVGD} hold in the Bayesian inverse problem setting. These assumptions can be interpreted in the context of Bayesian inverse problems as follows: Assumption~\ref{asm:Costs} and Assumption~\ref{asm:modelerror} are related to the forward model. Together they state that the approximation $G_{\ell}$ converges in an $L^2$-sense to $G$ as the level $\ell$ is increased. At the same time, as the level $\ell$ is increased and $G_{\ell}$ gets closer to $G$, the computational costs of evaluating $G_{\ell}$ may increase with a rate $\gamma$. This is typical behavior in, e.g., finite-element forward models where refining the mesh (increasing the level) leads to more accurate approximations and at the same time the computational costs of computing the finite-element solution increase with the number of mesh points. Furthermore, Assumption~\ref{asm:modelerror} is similar to the assumptions of \citep[Corollary 4.9]{S}, although there a pointwise bound is used. Assumption~\ref{asm:SVGDRate} is the convergence rate of SVGD and motivated by results from the literature as discussed in Section~\ref{sec:MLSVGD}; cf.~Remark~\ref{rm:Rate} for other convergence behavior. Assumption~\ref{asm:svgdpdfbdd} ensures that the tail of the posterior distribution behaves as the tail of the prior and is similar to the envelope assumption made in, e.g., acceptance/rejection sampling \citep{RobertBook}.

\begin{theorem}
    If Assumptions~\ref{asm:Costs},~\ref{asm:SVGDRate}, and~\ref{asm:svgdpdfbdd} hold and Assumption~\ref{asm:modelerror} holds with $\psi(\ell) = b_0 s^{-\alpha \ell}$, then Assumptions~\ref{asm:BiasRate} and~\ref{asm:MLBiasRate} hold and thus the cost complexity to find $\mu^{\text{ML}}$ with $\dhell(\mu^{\text{ML}}, \pi) \le \epsilon$ is given by
    \begin{equation}
        \cml^*(\epsilon) \le \frac{c_0 s^{2 \gamma}}{\lambda \gamma \log(s)} \log\left( s^{\alpha} + (1+s^{\alpha})\left(\frac{4}{3} + \frac{b_3}{3b_1b_2} \right) \right) \left( \frac{\sqrt{3b_1b_2b_0}}{\epsilon}  \right)^{2\gamma/\alpha} \, ,
    \end{equation}
    where the constants $b_1,b_2$ are independent of $\epsilon$ and given in the proof of Lemma~\ref{lmma:klmodelbound} in Appendix~\ref{appdx:proof1}.
    \label{thm:BayesCostComp}
\end{theorem}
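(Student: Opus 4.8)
The plan is to reduce the theorem to Proposition~\ref{prop:MLSVGD:CostsWithConvergentC} by verifying its three remaining hypotheses in the Bayesian setting: the decay Assumption~\ref{asm:BiasRate}, the consecutive-level decay Assumption~\ref{asm:MLBiasRate}, and the remainder bound $R_\ell \le k_3 s^{-\alpha\ell}$ on~\eqref{eq:ConstCL}. The engine for the first two is Lemma~\ref{lmma:klmodelbound}, which I would use as a black box to bound the KL divergence between two posteriors by the $L^2(\pi_0)$ distance of their forward maps. The reason a \emph{linear} bound in the model error is available is that, writing $\Phi_\ell(\btheta) = \tfrac12\|\by - G_\ell(\btheta)\|_{\bGamma^{-1}}^2$, the potential difference factorizes as $\Phi_i - \Phi_j = \tfrac12\langle 2\by - G_i - G_j,\, G_j - G_i\rangle_{\bGamma^{-1}}$, which is linear in $G_i - G_j$; a Cauchy--Schwarz step in $L^2(\pi_0)$, together with the envelope $\pi^{(\ell)} \le \pi_0/Z_\ell$ coming from the likelihood being bounded by one, then produces constants $b_1, b_2$ that are independent of $\epsilon$.

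For Assumption~\ref{asm:BiasRate} I would apply Lemma~\ref{lmma:klmodelbound} to the pair $(\pi^{(\ell)}, \pi)$, whose forward maps differ by $\|G - G_\ell\|_{L^2(\pi_0)} \le \psi(\ell) = b_0 s^{-\alpha\ell}$; this yields $\KL(\pi^{(\ell)}\,\|\,\pi) \le k_1 s^{-\alpha\ell}$ with $k_1$ proportional to $b_1 b_2 b_0$, and the companion bound $\KL(\mu_0\,\|\,\pi^{(\ell)}) \le k_0$ follows once the normalizers $Z_\ell$ are shown to be bounded away from zero uniformly in $\ell$, so that every $\pi^{(\ell)}$ is uniformly comparable to $\pi_0$. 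For Assumption~\ref{asm:MLBiasRate} I would apply the same lemma to the consecutive pair $(\pi^{(\ell-1)}, \pi^{(\ell)})$ and control the forward-map gap by the triangle inequality $\|G_{\ell-1} - G_\ell\|_{L^2(\pi_0)} \le \psi(\ell-1) + \psi(\ell) = b_0(1 + s^\alpha)\,s^{-\alpha\ell}$; this is precisely the source of the factor $(1 + s^\alpha)$ in the final bound and gives $k_2 \propto b_1 b_2 b_0 (1 + s^\alpha)$.

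The step I expect to be the main obstacle is the remainder bound, since Lemma~\ref{lmma:klmodelbound} does not apply to $R_\ell$ directly. The crucial simplification is that the additive constant $\log(Z_\ell/Z_{\ell-1})$ inside $\log(\pi^{(\ell-1)}/\pi^{(\ell)})$ integrates to zero against the signed probability difference $\mu^{(\ell-1)}_{T_{\ell-1}} - \pi^{(\ell-1)}$, so that $R_\ell = \int_{\R^d}(\mu^{(\ell-1)}_{T_{\ell-1}} - \pi^{(\ell-1)})(\Phi_\ell - \Phi_{\ell-1})\,\mathrm d\btheta$ involves only the potential difference. I would then dominate $|\mu^{(\ell-1)}_{T_{\ell-1}} - \pi^{(\ell-1)}|$ pointwise by $b_3\pi_0 + \pi_0/Z_{\ell-1}$, invoking the envelope Assumption~\ref{asm:svgdpdfbdd} for the first term and the likelihood bound for the second, and close the estimate with Cauchy--Schwarz in $L^2(\pi_0)$ against $\Phi_\ell - \Phi_{\ell-1}$. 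Since the model gap is again of order $(1 + s^\alpha)s^{-\alpha\ell}$, this delivers $R_\ell \le k_3 s^{-\alpha\ell}$ with $k_3 \propto b_0 b_3 (1 + s^\alpha)$. The delicate points are the uniform lower bound on $Z_{\ell-1}$ and the $L^2(\pi_0)$-integrability of $2\by - G_{\ell-1} - G_\ell$, both of which rest on $G, G_\ell \in L^2(\pi_0)$.

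With $k_1, k_2, k_3$ identified I would substitute into the bound of Proposition~\ref{prop:MLSVGD:CostsWithConvergentC}. The prefactor $\sqrt{2k_1}$ collapses to $\sqrt{3 b_1 b_2 b_0}$ and the ratio $(k_2 + k_3)/k_1$ collapses to $(1 + s^\alpha)\big(\tfrac43 + \tfrac{b_3}{3 b_1 b_2}\big)$, reproducing the stated cost complexity. The only remaining care is bookkeeping the numerical prefactors $\tfrac43$ and $\tfrac13$, which are inherited verbatim from the explicit constants in Lemma~\ref{lmma:klmodelbound}.
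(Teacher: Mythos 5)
Your proposal is correct and follows essentially the same route as the paper: reduce to Proposition~\ref{prop:MLSVGD:CostsWithConvergentC} via Lemma~\ref{lmma:klmodelbound} (giving $k_1 = \tfrac{3}{2}b_1b_2b_0$ and $k_2 = \tfrac{3}{2}b_1b_2b_0(1+s^{\alpha})$ exactly as in Lemma~\ref{lmma:bipklrate}), then bound $R_{\ell}$ by cancelling the $\log(Z_{\ell}/Z_{\ell-1})$ term against the signed probability difference, dominating the densities by $b_3\pi_0$ and $\pi_0/Z_{\ell-1}$, and closing with the Cauchy--Schwarz estimate from the lemma to get $k_3 \le \bigl(\tfrac{b_3}{2} + \tfrac{b_1b_2}{2}\bigr)b_0(1+s^{\alpha})$. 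The arithmetic collapse to $(1+s^{\alpha})\bigl(\tfrac{4}{3} + \tfrac{b_3}{3b_1b_2}\bigr)$ matches the paper's.
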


\begin{proof}
By Lemma~\ref{lmma:bipklrate} in Appendix~\ref{appdx:proof2} we know that Assumptions~\ref{asm:BiasRate} and~\ref{asm:MLBiasRate} hold with $k_1 = Cb_0$ and $k_2 = Cb_0(1 + s^{\alpha})$.  Thus, we just need to verify that $R_{\ell} \le k_3 s^{-\alpha \ell}$ for some constant $k_3$ to apply Proposition~\ref{prop:mlcostcomp}.
\begin{equation}
\begin{split}
    R_{\ell} &= \int_{\Theta} \left( \mu^{(\ell-1)}_{T_{\ell-1}}(\btheta) - \pi^{(\ell - 1)}(\btheta) \right) \log \left( \frac{\pi^{(\ell-1)}(\btheta)}{\pi^{(\ell)}(\btheta)} \right)\ d\btheta       \\
    &=  \int_{\Theta} \left( \mu^{(\ell-1)}_{T_{\ell-1}}(\btheta) - \pi^{(\ell - 1)}(\btheta) \right) \log \left(  \frac{Z_{\ell} \exp\left( -\frac{1}{2}\|\by - G_{\ell-1}(\btheta)\|_{\bGamma^{-1}}^2 \right)}{Z_{\ell - 1} \exp\left( -\frac{1}{2}\|\by - G_{\ell}(\btheta)\|_{\bGamma^{-1}}^2 \right) }  \right) \ \mathrm{d}\btheta \\
    &=  \int_{\Theta} \left( \mu^{(\ell-1)}_{T_{\ell-1}}(\btheta) - \pi^{(\ell - 1)}(\btheta) \right) \log \left(  \frac{ \exp\left( -\frac{1}{2} \|\by - G_{\ell-1}(\btheta) \|_{\bGamma^{-1}}^2 \right)}{ \exp\left( -\frac{1}{2}\|\by - G_{\ell}(\btheta)\|_{\bGamma^{-1}}^2 \right) }  \right) \ \mathrm{d}\btheta  ,
\end{split}
\end{equation}
where the last line follows from the fact that
\begin{equation}
     \int_{\Theta} \left( \mu^{(\ell-1)}_{T_{\ell-1}}(\btheta) - \pi^{(\ell - 1)}(\btheta) \right) \log \left(  \frac{Z_{\ell} }{Z_{\ell - 1}  }  \right) \ \mathrm{d}\btheta = 0
\end{equation}
since $\frac{Z_{\ell} }{Z_{\ell - 1}  }$ is a constant and $\pi^{(\ell - 1)}$ and $\mu^{(\ell-1)}_{T_{\ell-1}}$ both integrate to one.  By the triangle inequality we have that
\begin{equation}
\begin{split}
    R_{\ell} &\le \frac{1}{2} \int_{\Theta} \left| \| \by - G_{\ell}(\btheta)\|_{\bGamma^{-1}}^2 - \| \by - G_{\ell-1}(\btheta)\|_{\bGamma^{-1}}^2  \right|  \mu^{(\ell-1)}_{T_{\ell-1}}(\btheta)  \ \mathrm{d}\btheta  \\
    &\quad + \frac{1}{2} \int_{\Theta} \left| \| \by - G_{\ell}(\btheta)\|_{\bGamma^{-1}}^2 - \| \by - G_{\ell-1}(\btheta)\|_{\bGamma^{-1}}^2  \right|  \pi^{(\ell-1)}(\btheta)  \ \mathrm{d}\btheta\, .
\end{split}    
\end{equation}
We have that 
\begin{equation}
    \pi^{(\ell-1)}(\btheta) \le \frac{1}{Z_{\ell-1} }\pi_0(\btheta)\, ,
\end{equation}
so that when combined with Assumption~\ref{asm:svgdpdfbdd}
\begin{equation}
\begin{split}
    R_{\ell} &\le \frac{1}{2} \int_{\Theta} \left| \| \by - G_{\ell}(\btheta)\|_{\bGamma^{-1}}^2 - \| \by - G_{\ell-1}(\btheta)\|_{\bGamma^{-1}}^2  \right|  \mu^{(\ell-1)}_{T_{\ell-1}}(\btheta)  \ \mathrm{d}\btheta  \\
    &\quad + \frac{1}{2} \int_{\Theta} \left| \| \by - G_{\ell}(\btheta)\|_{\bGamma^{-1}}^2 - \| \by - G_{\ell-1}(\btheta)\|_{\bGamma^{-1}}^2  \right|  \pi^{(\ell-1)}(\btheta)  \ \mathrm{d}\btheta \\
    &\le \frac{b_3}{2} \int_{\Theta} \left| \| \by - G_{\ell}(\btheta)\|_{\bGamma^{-1}}^2 - \| \by - G_{\ell-1}(\btheta)\|_{\bGamma^{-1}}^2  \right|  \pi_{0}(\btheta)  \ \mathrm{d}\btheta  \\
    &\quad + \frac{1}{2 Z_{\ell - 1}} \int_{\Theta} 
    \left| \| \by - G_{\ell}(\btheta)\|_{\bGamma^{-1}}^2 - \| \by - G_{\ell-1}(\btheta)\|_{\bGamma^{-1}}^2  \right|  \pi_{0}(\btheta)  \ \mathrm{d}\btheta \\
    &\le \left(\frac{b_3}{2} + \frac{b_1 b_2}{2}\right) \|G_{\ell} - G_{\ell-1}\|_{L^2(\pi_0)} ,
\end{split}
\end{equation}
so that $k_3 = \left(\frac{b_3}{2} + \frac{b_1 b_2}{2}\right)b_0(1 + s^{\alpha})$.  Plugging in the values of $k_1,k_2$, and $k_3$ into Proposition~\ref{prop:mlcostcomp} gives the result.
\end{proof}

\section{A discrete, heuristic MLSVGD algorithm with adaptive stopping criterion}
\label{sec:Algorithm}

\begin{algorithm}[t]
\caption{Discrete MLSVGD with adaptive stopping criterion}
\SetAlgoLined
\textbf{Inputs:} (unnormalized) densities $\pi^{(1)}, \dots, \piL$, initial particles $\{\btheta^{[i]}_0\}_{i = 1}^N$, step size $\delta$, tolerance $\epsilon$\;

\KwResult{Particles $\{\btheta^{[i]}_t\}_{i = 1}^N$ }
 \For{$\ell = 1, \dots, L$}{
  
  \Repeat{$\hat{\bg}^{(\ell)}_t \le \epsilon$}{
  
  Set $s_i = \nabla \log \pil(\btheta^{[i]}_t)$ for $i = 1, \dots, N$\;
  
   \For{$i = 1, \dots, N$}
   {
   
   $\btheta^{[i]}_{t+\delta} = \btheta^{[i]}_t + \frac{\delta}{N} \left(\sum_{j = 1}^N \nabla_1 K(\btheta^{[j]}_t, \btheta_t^{[i]}) + \sum\nolimits_{j = 1}^N K(\btheta^{[j]}_t, \btheta^{[i]}_t) s_j \right)$\;
   
   }
   
   Estimate the norm of the gradient $\hat{\bg}^{(\ell)}_t$ as in \eqref{eq:GradientEstimate}
   
   Set $t \leftarrow t + \delta$
   }
 }
 \label{alg:MLSVGD}
\end{algorithm}

In this section, we propose a discrete, heuristic MLSVGD method given in Algorithm~\ref{alg:MLSVGD} that uses an adaptive stopping criterion to decide when to switch to the next higher level. The proposed Algorithm~\ref{alg:MLSVGD} uses the estimates of the gradient norms to decide when to switch to the next higher level.
Thus, the algorithm avoids requiring any constants that are not readily available in practice. In particular, the algorithm is independent of the constants and rates used in the MLSVGD cost complexity analysis to derive the optimal choice of times $T_1^*, \dots, T_L^*$. 

Let $\bg^{(\ell)}_t$ denote the functional gradient of the KL divergence, as discussed in Section~\ref{sec:SLSVGD:SVGD}, at $\mu^{(\ell)}_t$ with target measure $\pil$.  We approximate the expected norm of the gradient $\bE_{\btheta \sim \mu^{(\ell)}_t}\left\| \bg^{(\ell)}_t(\btheta) \right\|$ with the estimator
\begin{equation}
    \hat{\bg}^{(\ell)}_t = \frac{1}{N} \sum\nolimits_{i=1}^N  \left\| \sum\nolimits_{j = 1}^N \nabla_1 K(\btheta^{[j]}_t, \btheta_t^{[i]}) + \sum\nolimits_{j = 1}^N K(\btheta^{[j]}_t, \btheta^{[i]}_t) \nabla \log \pil (\btheta^{[j]}_t)  \right\| \, ,
\label{eq:GradientEstimate}
\end{equation}
where we note that each term in the sum is computed during the update~\eqref{eq:Prelim:SVGDUpdate}.  The adaptive stopping criteria used in Algorithm~\ref{alg:MLSVGD} is to terminate the iterations at level $\ell$ whenever $\hat{\bg}^{(\ell)}_t \le \epsilon$.  
Ideally, one would want to track the KL divergence between the SVGD approximation and the target distribution and switch to the following level once the KL divergence is below some specified threshold. However, because the normalized target density as well as the density of the SVGD approximation itself are unknown, attempting to monitor the KL divergence at each iteration is impractical. 
The adaptive stopping criteria based on the gradient norm, which we use, is motivated by \cite[Equation 61]{duncan2019geometry}.  It states that for small perturbations from the target density, the KL divergence between the perturbed distribution and the target distribution is asymptotically the same as the norm of the gradient squared.

\section{Numerical experiments}
\label{sec:NumericalResults}

We now demonstrate MLSVGD on Bayesian inverse problems: The aim is to infer the unknown coefficients of a PDE model from noisy observations of the state of the PDE at a few locations in the spatial domain. In Section~\ref{sec:NumRes:DiffReact}, we consider a reaction-diffusion model with unknown reaction parameters, which are then inferred from measurements of the diffusion-reaction field. In Section~\ref{sec:NumRes:EulerBernoulliBeam}, the displacement of an Euler-Bernoulli beam is observed and we then infer the stiffness of the beam. Details about the setup of the numerical experiments are in Appendix~\ref{appdx:NumRes}.

\subsection{Diffusion equation with nonlinear reaction term}
\label{sec:NumRes:DiffReact}

\begin{figure}
\begin{center}
\scalebox{0.94}{
\begin{tabular}{ccc}
\includegraphics[width=0.35\columnwidth]{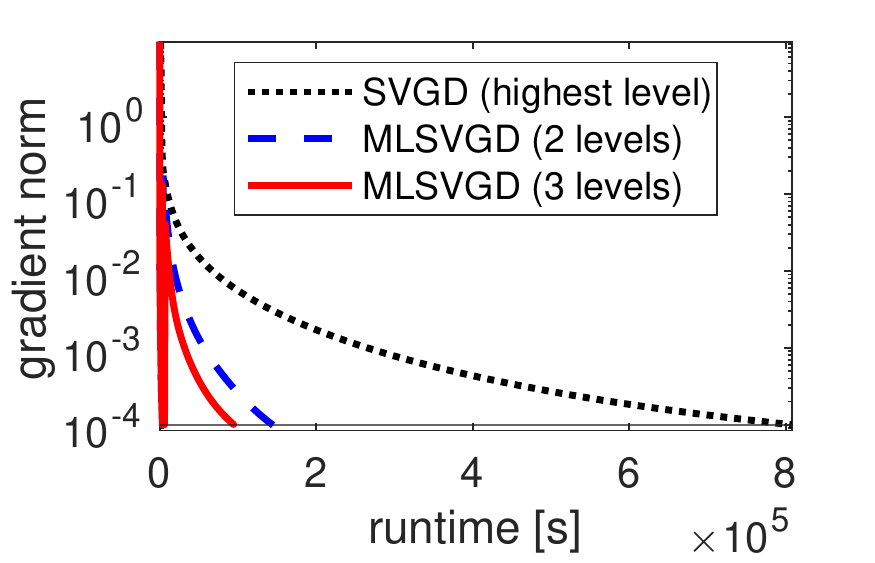} & \hspace*{-0.2cm}\includegraphics[width=0.35\columnwidth]{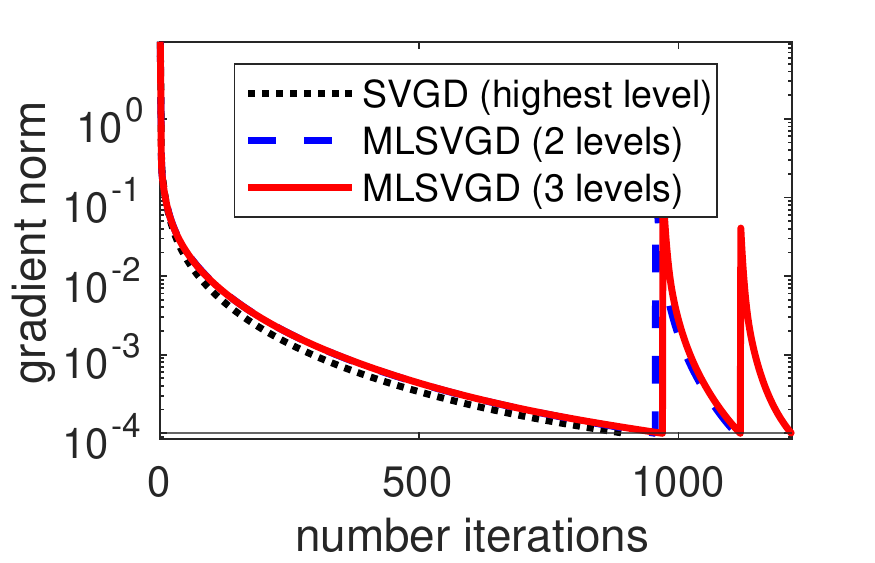} &
\hspace*{-0.2cm}\includegraphics[width=0.30\columnwidth]{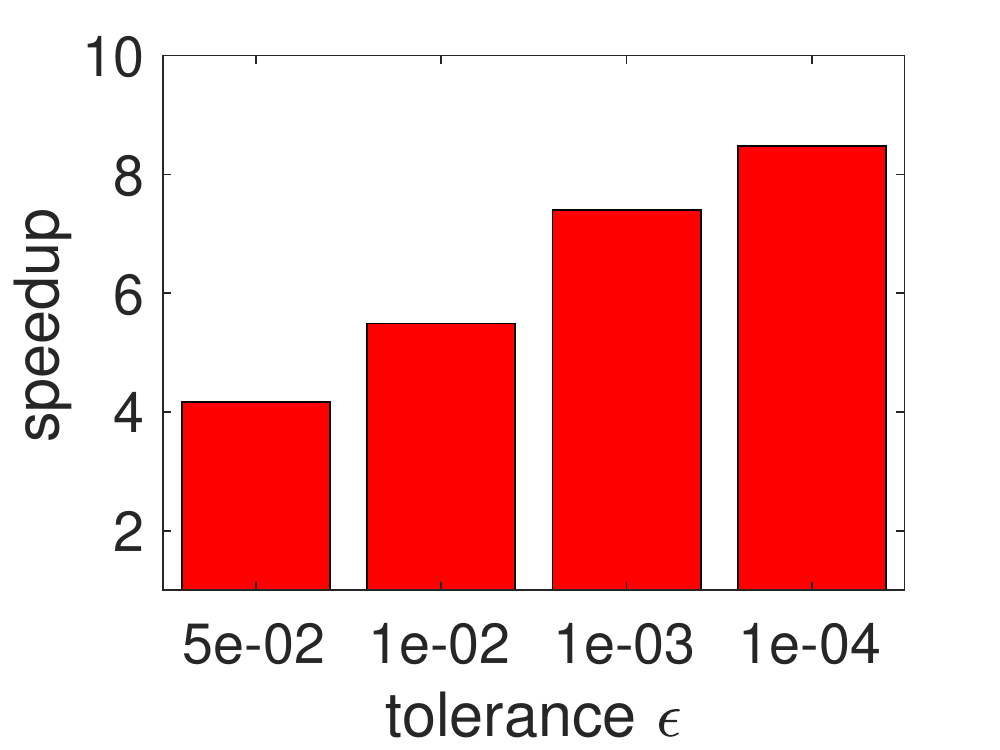}\\
(a) runtime, tolerance $\epsilon = 10^{-4}$ & \hspace*{-0.2cm}(b) iterations, tolerance $\epsilon = 10^{-4}$ & \hspace*{-0.2cm}(c) MLSVGD (3 levels)
\end{tabular}
}
\end{center}
\caption{Diffusion-reaction: MLSVGD achieves speedups because most of the iterations are on lower, cheaper levels, in contrast to SVGD that performs all iterations on the highest, most expensive level. A spike in the gradient norm indicates switching to a higher level.}
\label{fig:NLExp:CostErr}
\end{figure}

Let $\Omega = (0,1)^2$ and $\P = \R^2$ and consider the PDE 
\begin{equation}
-\nabla^2 u(x_1,x_2; \btheta) + g(u(x_1,x_2;\btheta),\btheta) = 100\sin(2\pi x_1)\sin(2\pi x_2)\,,\quad \bx \in \Omega\,,
\label{eq:diffusion_equation}
\end{equation}
with homogeneous Dirichlet boundary conditions, where $\bx = [x_1,x_2]^T$, $\btheta = [\theta_1,\theta_2]^T \in \P $, and $u:\Omega\times\P\to\R$ is the solution function. The nonlinear reaction term $g$ is
$$
g(u(\bx;\btheta),\btheta) = (0.1\sin(\theta_1) + 2)\exp(-2.7\theta_1^2)(\exp(1.8\theta_2u(\bx;\btheta)) - 1)\,.
$$
The PDE \eqref{eq:diffusion_equation} is discretized with finite differences on a grid with equidistant grid points and mesh width $h > 0$. The corresponding system of nonlinear equations is solved with Newton's method and inexact line search based on the Armijo condition. The model $G_{\ell}:\P \to \Y$ derived with mesh width $h  = 2^{-\ell - 2}$ maps from $\P$ into $\Y = \R^{12}$. The components of the observed data $G_{\ell}(\btheta) \in \Y$ correspond to the value of the approximated solution function at the spatial coordinates $[0.25i\,, 0.2j]^T \in\Omega$ with $i \in [3]\,, j\in [4]$. We set $\btheta^* = [-\pi/4,3]^T$ and consider the data $\by = G_{L+1}(\btheta^*) + \boldsymbol e$, where $L = 3$ (i.e., $h = 2^{-5}$) and $\boldsymbol e$ adds zero-mean Gaussian noise of $0.5\%$. The prior distribution is a Gaussian distribution with mean $[\pi/2, 1.5]$ and diagonal covariance matrix with $[50, 0.5]$ on the diagonal.

\begin{figure}
\scalebox{0.95}{
\begin{tabular}{ccc}
\includegraphics[width=0.33\columnwidth]{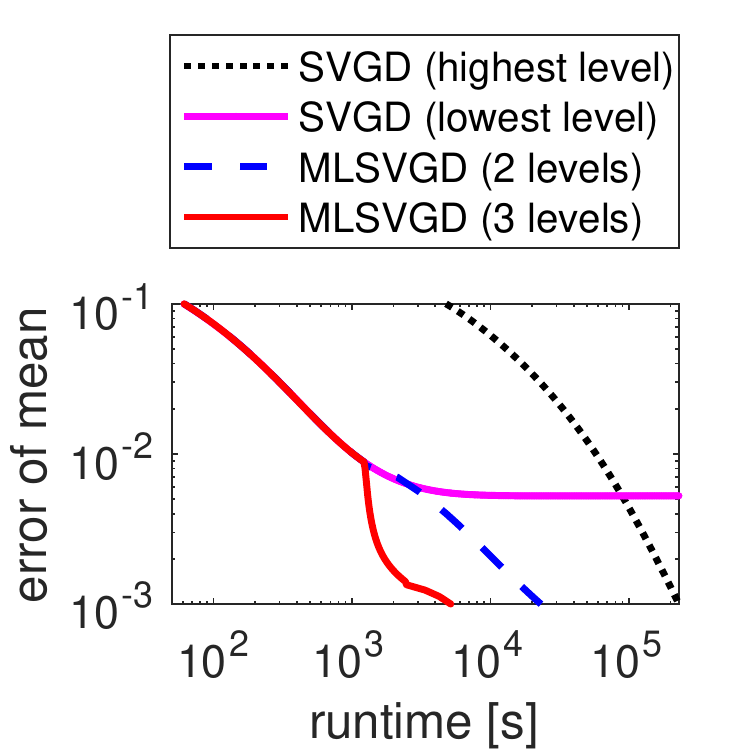} & \hspace*{-0.2cm}\includegraphics[width=0.33\columnwidth]{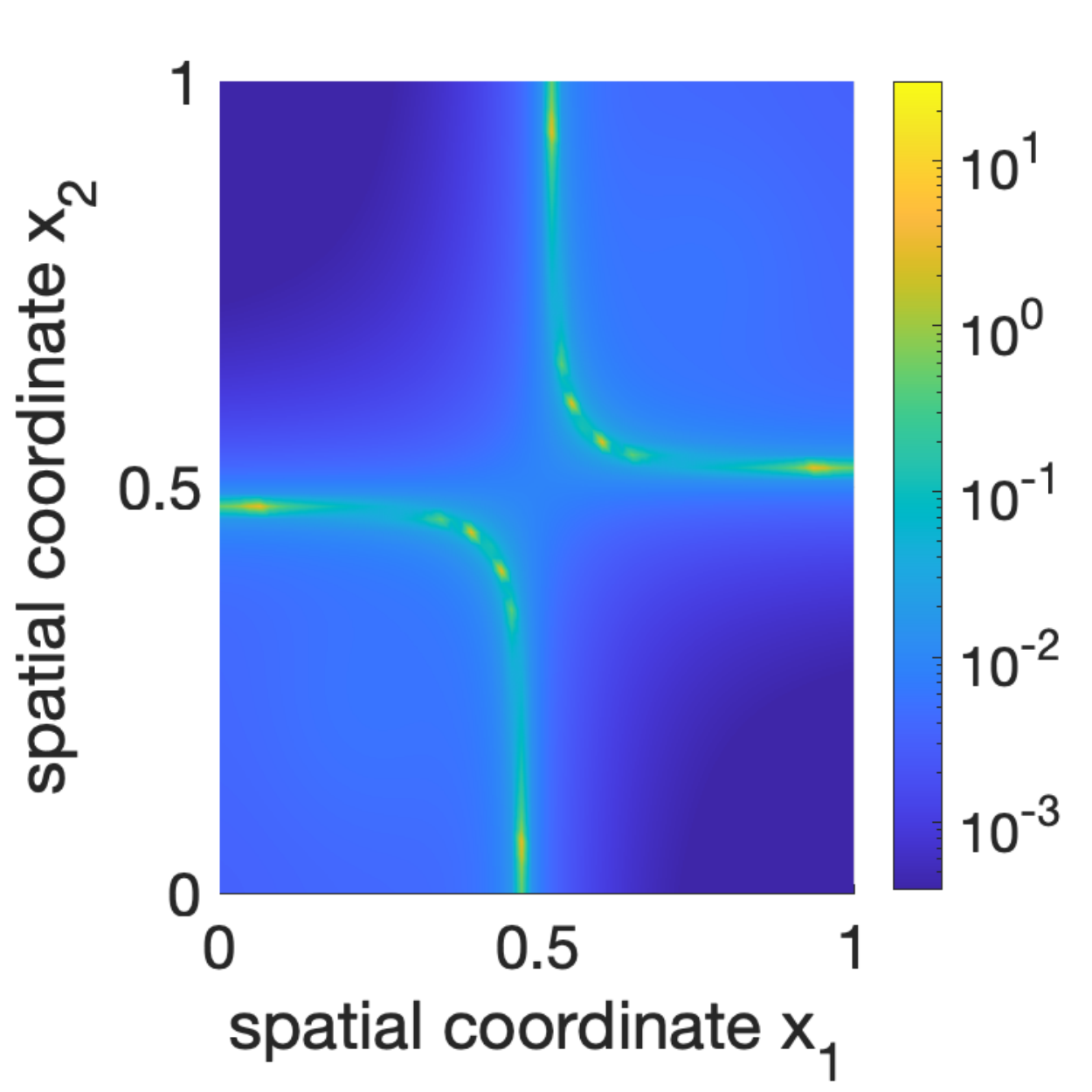} & \hspace*{-0.2cm}\includegraphics[width=0.33\columnwidth]{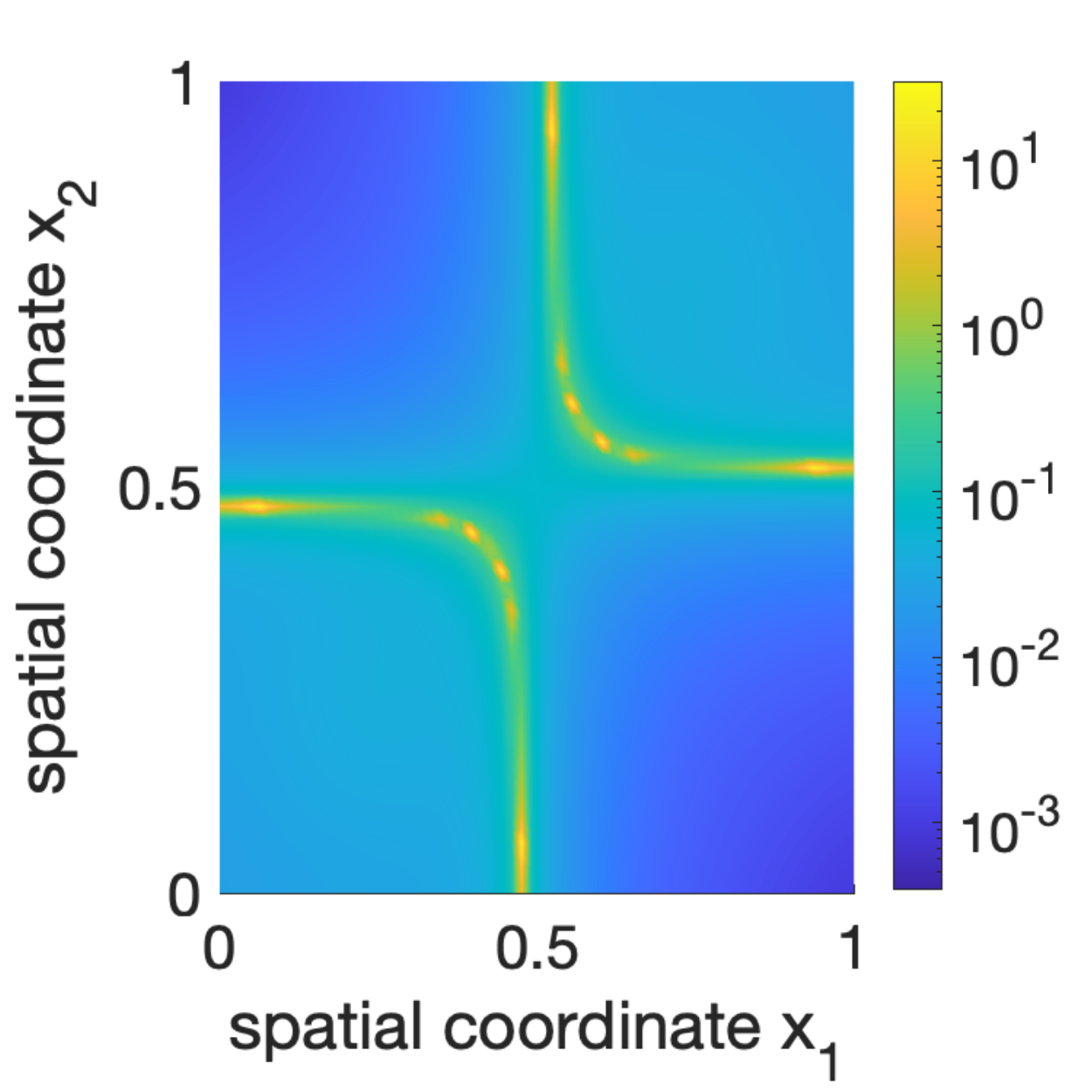}\\
(a) error w.r.t.~MCMC reference & \hspace*{-0.2cm}(b) MLSVGD & \hspace*{-0.2cm}(c) SVGD (same costs as (b))
\end{tabular}
}
\caption{Diffusion-reaction: MLSVGD reaches a particle mean with error $10^{-3}$ with respect to an MCMC reference with more than one order of magnitude speedup compared to SVGD.}
\label{fig:NLExp:MeanError}
\end{figure}

\paragraph{SVGD and MLSVGD} We start with $N = 1000$ particles sampled from a normal distribution with mean $[1, 1]^T$ and diagonal covariance matrix with $10^{-4}$ on the diagonal. The kernel is $k(\btheta,\btheta') = \exp\parr*{- \norm{\btheta - \btheta'}^2 / (2\sigma_k)}$ with $\sigma_k = 10^{-2}$. The gradient of the likelihood is approximated with central differences with mesh width $2^{-6}$. The step size is $\delta = 10^{-1}$. We run SVGD for $\piL$ until the norm of the estimated gradient \eqref{eq:GradientEstimate} reaches a tolerance $\epsilon$. We also run MLSVGD as in Algorithm~\ref{alg:MLSVGD} with levels $\ell \in \{1, 2, 3\}$ and $\ell \in \{1, 3\}$.

\paragraph{Results} Figure \ref{fig:NLExp:CostErr} shows the decay of the estimated gradient norm \eqref{eq:GradientEstimate} for SVGD and MLSVGD with two and three levels, respectively, for a tolerance $\epsilon = 10^{-4}$. While the number of total iterations over all levels in MLSVGD is higher than in SVGD, the costs per iteration are lower on lower levels and thus MLSVGD achieves a speedup of about 8 in this example. Notice that a switch to the next higher level leads to an increase of the gradient norm (e.g., Figure \ref{fig:NLExp:CostErr}b near 1000 iterations), which is then reduced quickly in subsequent iterations. MLSVGD with 2 levels ($\ell \in \{1, 3\}$) achieves a slightly lower speedup than MLSVGD with 3 levels in this example. Figure \ref{fig:NLExp:CostErr}c shows the speedup of MLSVGD with 3 levels for various tolerances. The speedup increases as the tolerance decreases. Figure \ref{fig:NLExp:MeanError}a shows the error of the particle mean with respect to an MCMC reference over 10 replicates (cf.~Appendix~\ref{appdx:NumRes}). The proposed MLSVGD with 3 levels achieves more than one order of magnitude speedup compared to SVGD on the highest level. Notice that running SVGD on the lowest level $\ell = 1$ is fast but leads to a bias of the particle mean as indicated by the leveling off of the corresponding curve. Figure \ref{fig:NLExp:MeanError}b-c show the pointwise error of the finite-difference solution $u$ of \eqref{eq:diffusion_equation} computed at the particle mean of MLSVGD and the particle mean of SVGD with the same costs as MLSVGD. The error is computed with respect to the solution at the MCMC reference. Notice the lighter color in the SVGD plot, which indicates higher pointwise error.

\subsection{Euler-Bernoulli beam}
\label{sec:NumRes:EulerBernoulliBeam}

Let $\Omega = (0,1) \subset \R$ and consider the Euler-Bernoulli beam described by 
\begin{equation}
\partial_x^2(E(x) \partial_x^2 u(x)) = f(x)\,, \quad x \in \Omega\,,
\label{eq:euler_bernoulli_equation}    
\end{equation}

where $u : \Omega \to \R$ is the vertical deflection of the beam and $f : \Omega \to \R$ is the load. The effective stiffness of the beam is given by $E : \Omega\to \R$ and describes the beam geometry and material properties. The beam is in cantilever configuration, where the left boundary is fixed and the right boundary is free.

The observation $\by \in \R^{41}$ is the displacement at $41$ equidistant points in $\Omega$ polluted with $0.01\%$ zero-mean Gaussian noise. We consider a smoothed piecewise constant approximation $\hat{E}_d$ of the stiffness $E$ that depends on $d \in \mathbb{N}$ parameters $\btheta = [\theta_1, \dots, \theta_d]^T$, cf.~Appendix~\ref{appdx:NumRes}. The parameter-to-observable map $G_{\ell}$ is then given by numerically solving \eqref{eq:euler_bernoulli_equation} with  stiffness $\hat{E}_d$ on level $\ell$. The levels $\ell = 1, \dots, 6$ are corresponding to a discretization of the PDE on a mesh of $51, 101, \dots, 501$ equidistant grid points. The prior is log-normal with parameters $\mu = 1$ and $\sigma = 0.05$.  

\begin{figure}
\scalebox{0.97}{
\begin{tabular}{ccc}
\includegraphics[width=0.35\columnwidth]{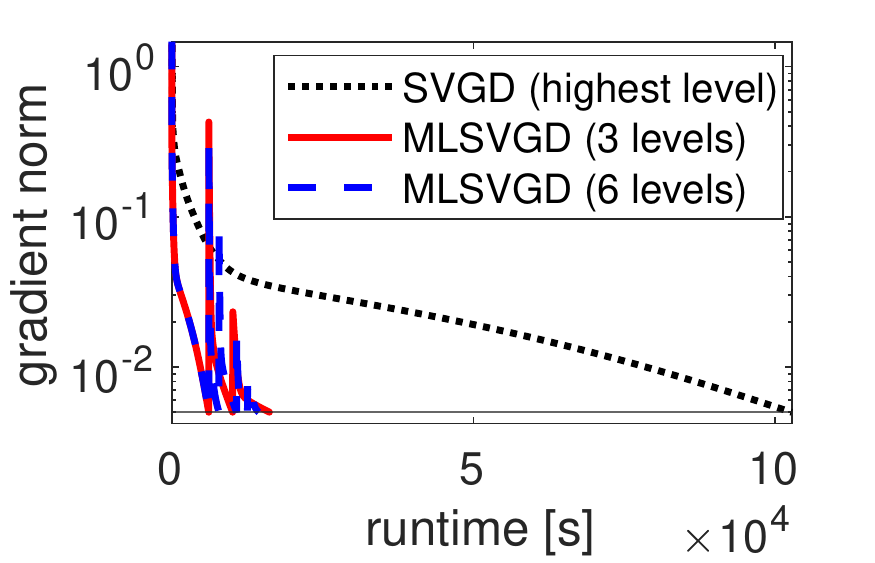} & \hspace*{-0.5cm}\includegraphics[width=0.30\columnwidth]{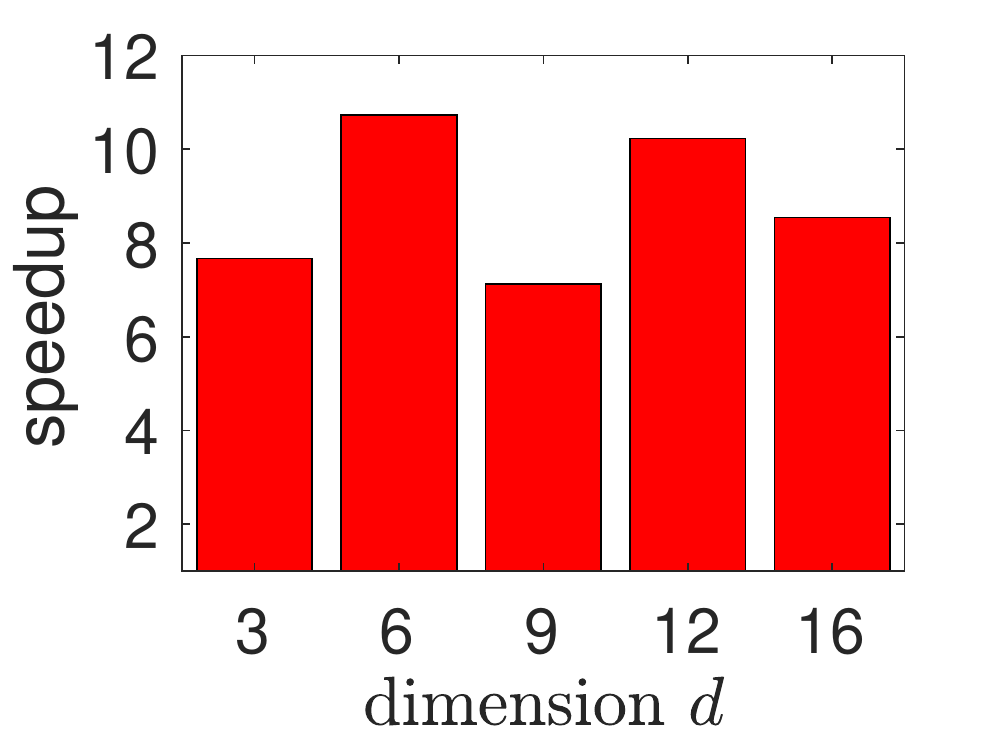}& \hspace*{-0.5cm}\includegraphics[width=0.35\columnwidth]{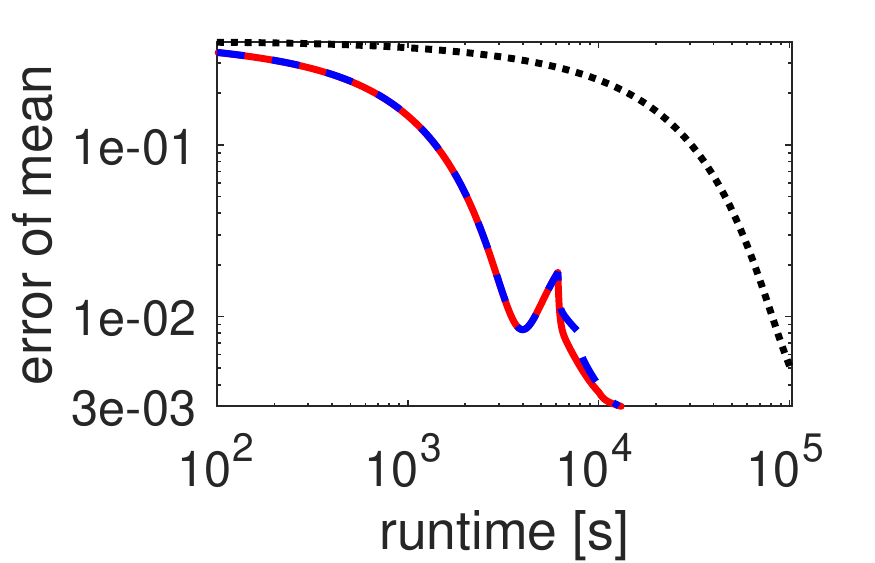} \\
(a) runtime, $d = 9$ &  (b) \parbox{3cm}{speedup (3 levels)} & (c) error w.r.t.~MCMC reference
\end{tabular}
}
\caption{Euler-Bernoulli: MLSVGD achieves speedups between 6--10 in this example compared to SVGD. (Plots (c) shown for $d = 9$.)}
\label{fig:EB:All}
\end{figure}
\paragraph{Results for SVGD and MLSVGD} The initial distribution is normal with mean $[1, 1, \dots, 1]^T \in \mathbb{R}^d$ and diagonal covariance with $4 \times 10^{-4}$ on the diagonal. We consider $N = 500$ particles. The step size is $\delta = 10^{-3}$ for $d = 3$ and $\delta = 10^{-2}$ for $d \in \{6, 9\}$ and $\delta = 5 \times 10^{-3}$ for $d \in \{12, 16\}$. The kernel bandwidth $\sigma_k$ is $10^{-6}$ for $d = 3$ and $10^{-5}$ for $d \in \{6, 9\}$ and $5 \times 10^{-5}$ for $d \in \{12, 16\}$. We consider MLSVGD for levels $\ell \in \{1, \dots, 6\}$ and $\ell \in \{1, 3, 6\}$. The rest of the setup is the same as in Section~\ref{sec:NumRes:DiffReact}. Figure~\ref{fig:EB:All}a shows the convergence behavior of MLSVGD and SVGD for the problem with $d = 9$ dimensions and tolerance $\epsilon = 5 \times 10^{-3}$. A speedup of about 6 is observed to reach an estimated gradient norm below $\epsilon$. Note that MLSVGD with 3 levels achieves about the same speedup as MLSVGD with 6 levels, which indicates that adding more and more intermediate levels cannot further reduce the costs. Speedups are reported in Figure~\ref{fig:EB:All}b for MLSVGD with 3 levels; cf.~Appendix~\ref{appdx:NumRes}. If one asks for the error of the particle mean to be below $3 \times 10^{-3}$ with respect to an MCMC reference, then MLSVGD achieves a speedup of about one order of magnitude compared to SVGD, as shown in Figure~\ref{fig:EB:All}c. 
Figure~\ref{fig:EBVarianceDim9} shows the relative pointwise error of the finite-difference solution $u$ of \eqref{eq:euler_bernoulli_equation} computed at the particles obtained with MLSVGD and single-level SVGD; see also Figure~\ref{fig:EBVariance} in the appendix.  The error bars denote the minimum and maximum pointwise error of the inferred solutions over the ensemble of particles. The results show that MLSVGD achieves a similar error as single-level SVGD even though the computational costs of MLSVGD are lower than single-level SVGD in this example; cf.~Figure~\ref{fig:EB:All}b. Additionally, the variation of the error in terms of minimum and maximum error over the ensemble is comparable between MLSVGD and single-level SVGD in this example.

\begin{figure}
\scalebox{0.95}{
\begin{tabular}{ccc}
 \hspace*{-0.25cm}\includegraphics[width=0.33\columnwidth]{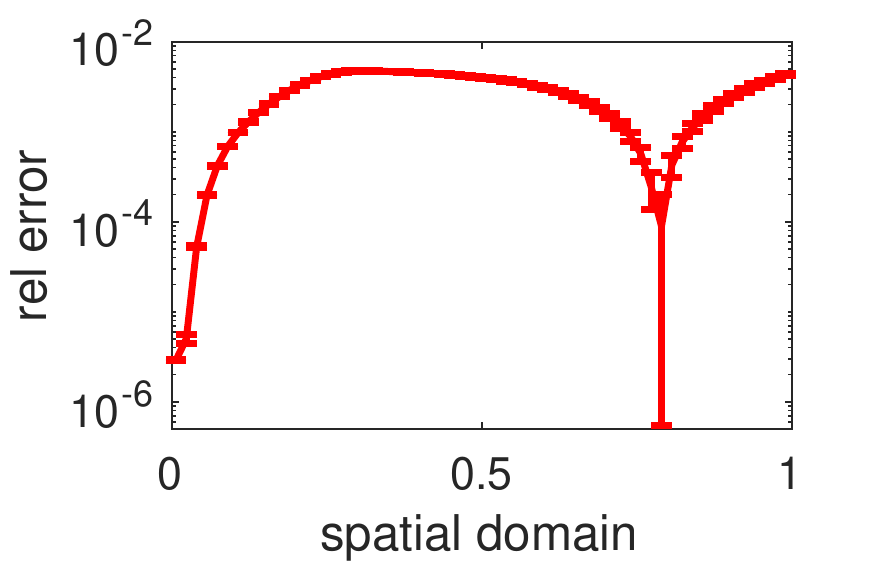} & \hspace*{-0.25cm}\includegraphics[width=0.33\columnwidth]{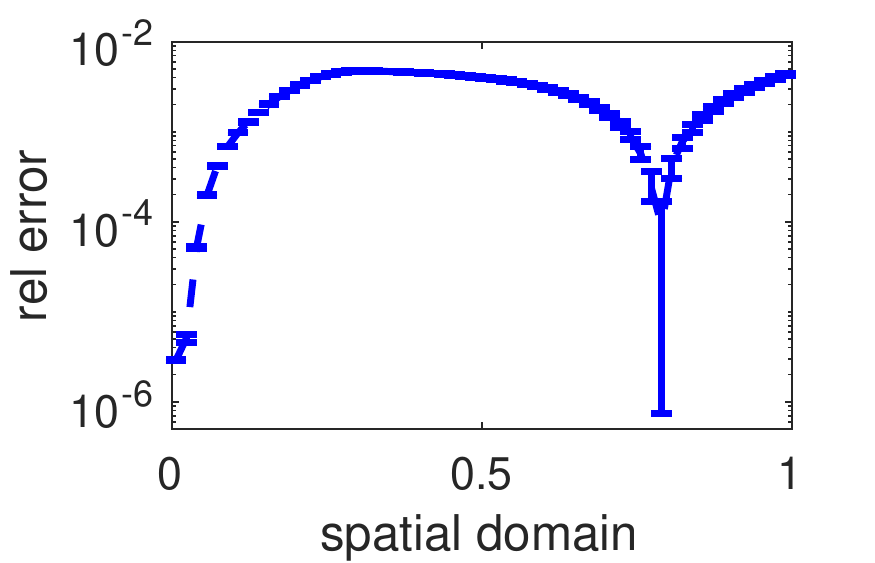} & \includegraphics[width=0.33\columnwidth]{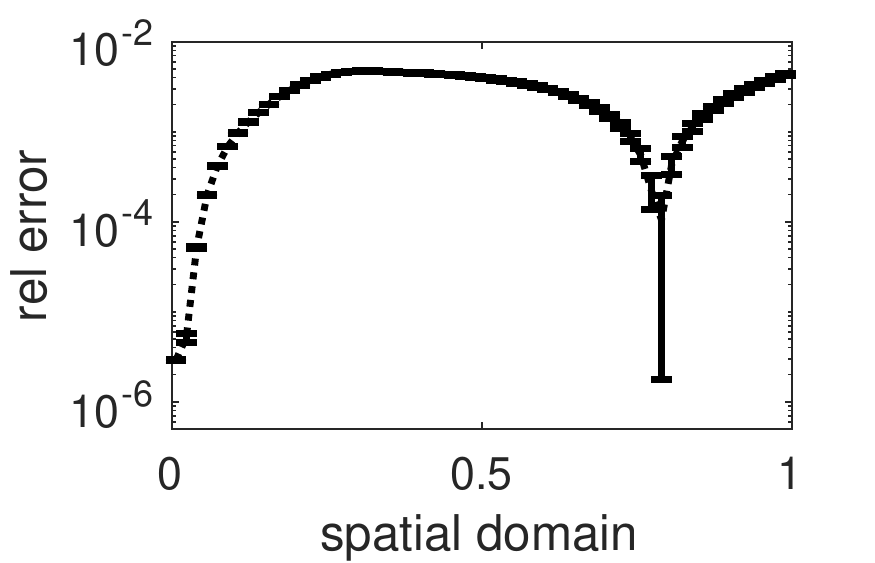}\\
(a) MLSVGD (3 levels)  & \hspace*{-0.25cm}(b) MLSVGD (6 levels)& \hspace*{-0.25cm}(c) single-level SVGD\\
\end{tabular}
}
\caption{Euler-Bernoulli: The pointwise errors over an ensemble of inferred solutions obtained with MLSVGD (left and middle) is comparable to the errors obtained with the computationally more expensive single-level SVGD (right) in this example. The error bars show the minimum and maximum error over the ensemble. Results are shown for $d = 9$.}
\label{fig:EBVarianceDim9}
\end{figure}

\section{Conclusions}
The proposed MLSVGD shows that speedups compared to single-level SVGD can be achieved by balancing the SVGD error with the discretization error given by a hierarchy of ever more accurate and ever more expensive-to-sample distributions. The analysis is conducted in the mean-field limit and shows a cost complexity reduction of MLSVGD compared to single-level SVGD. The numerical experiments demonstrate empirically that MLSVGD achieves up to one order of magnitude speedup compared to single-level SVGD in the discrete-time and finite-particle regime in the applications considered in this work. A cost analysis in discrete time and with finite particles remains future work for MLSVGD especially because there are only limited convergence results available even for single-level SVGD for discrete-time and finite-particle regimes.

\section*{Acknowledgements}
The first and third author were partially funded by the National Science Foundation under grants CMMI-1761068 and IIS-1901091. The first author was additionally supported in part by the Research Training Group in Modeling and
Simulation funded by the National Science Foundation via grant RTG/DMS 1646339. The third author acknowledges additional support from the AFOSR MURI on multi-information sources of multi-physics systems under
Award Number FA9550-15-1-0038 (Dr. Fariba Fahroo).

\bibliography{references}
\bibliographystyle{abbrv}

\appendix 

\section{Metrics and divergences and other definitions}
\label{appx:Metrics}
The Hellinger distance between two probability distributions $\mu$ and $\eta$ on $\R^d$ is defined as
\begin{equation}
    \dhell(\mu, \eta) = \sqrt{\frac{1}{2} \int_{\R^d} \left( \sqrt{\mu(\btheta)} - \sqrt{\eta(\btheta)} \right)^2\ \mathrm{d}\btheta}\, .
    \label{eq:hellinger}
\end{equation}
The Kullback-Leibler (KL) divergence from $\mu$ to $\eta$ is defined as
\begin{equation}
    \KL(\mu || \eta) = \int_{\R^d} \mu(\btheta) \log\left(\frac{\mu(\btheta)}{\eta(\btheta)} \right)\ \mathrm{d}\btheta \, .
\label{eq:kldiv}
\end{equation}
Define the $L^2(\mu)$ space for a distribution $\mu$ and vector-valued functions as
\begin{equation}
	L^2(\mu) = \left\{ f : \int_{\R^d} \|f(\btheta)\|^2 \mu(\btheta) \ \mathrm{d}\btheta < \infty \right\}\, ,
\end{equation}
and the $L^2(\mu)$ norm of a vector-valued function as
\begin{equation}
	\|f\|_{L^2(\mu)}^2 = \int_{\R^d} \|f(\btheta)\|^2 \mu(\btheta) \ \mathrm{d}\btheta\,.
	\label{eq:appx:L2Norm}
\end{equation}

\section{Lemma~\ref{lmma:klmodelbound} and proof}
\label{appdx:proof1}

\begin{lemma}
If Assumption~\ref{asm:modelerror} holds, there exists a constant $C > 0$ such that for all $1 \le \ell_1, \ell_2 \le \infty$ sufficiently large
\begin{equation}
	\text{\emph{KL}}(\pi^{(\ell_1)}\ ||\ \pi^{(\ell_2)}) \le C \|G_{\ell_1} - G_{\ell_2}\|_{L^2(\pi_0)}\, .
\end{equation}
Note that for $\ell = \infty$ we say $G_{\ell} = G$.
\label{lmma:klmodelbound}
\end{lemma}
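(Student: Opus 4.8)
The plan is to make the normalizing constants and potentials explicit, reduce the KL divergence to an integral of a potential difference, and then convert that difference into the $L^2(\pi_0)$ norm of $G_{\ell_1}-G_{\ell_2}$ by Cauchy--Schwarz. Write $\Phi_\ell(\btheta) = \frac{1}{2}\|\by - G_\ell(\btheta)\|_{\bGamma^{-1}}^2$ for the data-misfit potential, so that $\pi^{(\ell)} \propto e^{-\Phi_\ell}\pi_0$ with $Z_\ell = \bE_{\pi_0}[e^{-\Phi_\ell}]$. Since $\log(\pi^{(\ell_1)}/\pi^{(\ell_2)}) = \log(Z_{\ell_2}/Z_{\ell_1}) + \Phi_{\ell_2} - \Phi_{\ell_1}$, integrating against $\pi^{(\ell_1)}$ gives the exact identity $\KL(\pi^{(\ell_1)}\,||\,\pi^{(\ell_2)}) = \log(Z_{\ell_2}/Z_{\ell_1}) + \int \pi^{(\ell_1)}(\Phi_{\ell_2} - \Phi_{\ell_1})\,\mathrm d\btheta$, which is the same type of $Z_\ell$-cancellation already used in the proof of Theorem~\ref{thm:BayesCostComp}.

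The first key step is to dispose of the log-ratio of partition functions. Rewriting $Z_{\ell_1}/Z_{\ell_2} = \bE_{\pi^{(\ell_2)}}[e^{\Phi_{\ell_2} - \Phi_{\ell_1}}]$ and applying Jensen's inequality $\log\bE[e^X] \ge \bE[X]$ yields $\log(Z_{\ell_2}/Z_{\ell_1}) \le \int\pi^{(\ell_2)}(\Phi_{\ell_1} - \Phi_{\ell_2})\,\mathrm d\btheta$. Substituting this into the identity above collapses the bound to the symmetric expression
\[
\KL(\pi^{(\ell_1)}\,||\,\pi^{(\ell_2)}) \le \int_{\Theta}\left(\pi^{(\ell_1)}(\btheta) - \pi^{(\ell_2)}(\btheta)\right)\left(\Phi_{\ell_2}(\btheta) - \Phi_{\ell_1}(\btheta)\right)\,\mathrm d\btheta \le \int_{\Theta}\left(\pi^{(\ell_1)} + \pi^{(\ell_2)}\right)\left|\Phi_{\ell_2} - \Phi_{\ell_1}\right|\,\mathrm d\btheta,
\]
which has the same flavour as the remainder $R_\ell$ in \eqref{eq:ConstCL}.

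Next I would bound the potential difference pointwise. Using the elementary identity $\|a\|_{\bGamma^{-1}}^2 - \|b\|_{\bGamma^{-1}}^2 = \langle \bGamma^{-1}(a+b),\, a-b\rangle$ with $a = \by - G_{\ell_2}$ and $b = \by - G_{\ell_1}$ gives $|\Phi_{\ell_2} - \Phi_{\ell_1}| \le \frac{1}{2}\|2\by - G_{\ell_1} - G_{\ell_2}\|_{\bGamma^{-1}}\,\|G_{\ell_1} - G_{\ell_2}\|_{\bGamma^{-1}}$. Bounding $\pi^{(\ell_i)} \le \pi_0/Z_{\ell_i}$ (valid since $e^{-\Phi_{\ell_i}} \le 1$) switches the integrating measure to $\pi_0$, and Cauchy--Schwarz in $L^2(\pi_0)$ then produces a bound of the form $C\,\|G_{\ell_1} - G_{\ell_2}\|_{L^2(\pi_0)}$, where $C$ absorbs $\lambda_{\max}(\bGamma^{-1})$, the factors $1/Z_{\ell_i}$, and $\sup_\ell\|2\by - G_{\ell_1} - G_{\ell_2}\|_{L^2(\pi_0)}$. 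These last two factors are exactly the constants $b_1$ (an upper bound on $1/Z_\ell$) and $b_2$ (a uniform $L^2(\pi_0)$ bound on the sum term) appearing in Theorem~\ref{thm:BayesCostComp}.

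The main obstacle is showing that these constants are finite and uniform in $\ell$, which is precisely where the hypothesis that $\ell_1,\ell_2$ are sufficiently large enters. For $b_2$ I would use that $G_\ell \to G$ in $L^2(\pi_0)$ (Assumption~\ref{asm:modelerror} with $G_\infty = G$), so $\|G_\ell\|_{L^2(\pi_0)} \le \|G\|_{L^2(\pi_0)} + \psi(\ell)$ is uniformly bounded while $\by$ is fixed. For $b_1$ I would show $Z_\ell$ is bounded away from zero for large $\ell$: since $x\mapsto e^{-x}$ is $1$-Lipschitz on $[0,\infty)$ and $\Phi_\ell \ge 0$, one gets $|Z_\ell - Z| \le \int\pi_0|\Phi_\ell - \Phi| \le C'\|G_\ell - G\|_{L^2(\pi_0)} \le C'\psi(\ell) \to 0$, hence $Z_\ell \to Z > 0$ and so $Z_\ell \ge Z/2$ for all $\ell$ large enough, giving $1/Z_\ell \le 2/Z =: b_1$. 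Assembling these uniform constants into $C$ completes the proof.
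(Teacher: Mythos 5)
Your argument is correct, and it reaches the same final estimate through the same core machinery as the paper --- the identity $\|\bu-\bw\|_{\bGamma^{-1}}^2-\|\bv-\bw\|_{\bGamma^{-1}}^2=\langle \bGamma^{-1}(\bu+\bv-2\bw),\bu-\bv\rangle$, Cauchy--Schwarz in $L^2(\pi_0)$, the envelope $\pi^{(\ell)}\le \pi_0/Z_\ell$, and uniform-in-$\ell$ bounds on $\|2\by-G_{\ell_1}-G_{\ell_2}\|$ and $1/Z_\ell$ obtained from Assumption~\ref{asm:modelerror} --- but it handles the normalizing-constant ratio by a genuinely different and cleaner device. The paper keeps $\log(Z_{\ell_2}/Z_{\ell_1})$ as a separate term, bounds $|Z_{\ell_1}-Z_{\ell_2}|$ via the $1$-Lipschitz property of $e^{-x}$ on $[0,\infty)$ (cf.~\eqref{eq:bound4}), and then converts this into a bound on the log-ratio through the elementary inequalities in \eqref{eq:bound9}--\eqref{eq:bound12}; it is precisely this last conversion that forces the restriction to $\ell_1,\ell_2$ sufficiently large, so that $|Z_{\ell_1}-Z_{\ell_2}|\le b_2^{-1}/2$. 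Your Jensen step $\log(Z_{\ell_2}/Z_{\ell_1})\le \int \pi^{(\ell_2)}(\Phi_{\ell_1}-\Phi_{\ell_2})\,\mathrm{d}\btheta$ is equivalent to adding the nonnegative quantity $\KL(\pi^{(\ell_2)}\,||\,\pi^{(\ell_1)})$, i.e.\ passing to the symmetrized divergence $\int(\pi^{(\ell_1)}-\pi^{(\ell_2)})(\Phi_{\ell_2}-\Phi_{\ell_1})\,\mathrm{d}\btheta$, which eliminates the log-ratio entirely and makes the ``sufficiently large'' hypothesis needed only for the uniform lower bound on $Z_\ell$ (which in fact holds for all $\ell$ since $Z_\ell>0$ and $Z_\ell\to Z>0$); your version of the lemma is therefore marginally stronger, at the price of a constant roughly $2b_1b_2$ in place of the paper's $\tfrac{3}{2}b_1b_2$. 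Two bookkeeping remarks if this is to replace the paper's proof: your labels $b_1$ and $b_2$ are swapped relative to the paper's (there $b_1$ bounds $\|\bGamma^{-1}(2\by-G_{\ell_1}-G_{\ell_2})\|_{L^2(\pi_0)}$ and $b_2$ bounds $1/Z_\ell$), and since Theorem~\ref{thm:BayesCostComp} quotes the constants ``given in the proof of Lemma~\ref{lmma:klmodelbound}'' explicitly, the value of $C$ propagated into $k_1$, $k_2$, $k_3$ there would need to be adjusted accordingly.
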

We note that this proof closely mirrors the proofs of Lemmas 4.2 and 4.3 in~\citep{MX}, but is slightly more general.

\begin{proof}
For brevity write $G_i = G_{\ell_i}$, $Z_i = Z_{\ell_i}$, and $\pi_i = \pi^{(\ell_i)}$ for $i=1,2$.  Consider that for any vectors $\bu,\bv,\bw \in \R^d$ and symmetric positive definite matrix $\bA \in \R^{d\times d}$ we have
\begin{equation}
\begin{split}
	\| \bu - \bw \|_{\bA}^2 - \| \bv - \bw \|_{\bA}^2  &= \| (\bu - \bv) + (\bv - \bw)\|_{\bA}^2 - \|\bv - \bw \|_{\bA}^2 \\
	&= \langle (\bu - \bv),\ \bA(\bu - \bv) \rangle + 2\langle (\bu - \bv),\ \bA(\bv - \bw) \rangle \\
	&= \langle (\bu - \bv),\ \bA( \bu + \bv - 2\bw ) \rangle \\
	&\le \|\bu - \bv\| \cdot \| \bA (\bu + \bv - 2\bw) \|\, ,
\end{split}
\label{eq:bound1}
\end{equation}
with the last line following from the Cauchy-Schwarz inequality.  Applying this bound with $\bu = G_1(\btheta)$, $\bv = G_2(\btheta)$, $\bw = \by$, and $\bA = \bGamma^{-1}$ gives
\begin{equation}
\begin{split}
& \int_{\Theta} \left| \| \by - G_1(\btheta)\|_{\bGamma^{-1}}^2 - \| \by - G_2(\btheta)\|_{\bGamma^{-1}}^2  \right| \pi_0(\btheta) \ \mathrm{d}\btheta \\
&\le \int_{\Theta} \| G_1(\btheta) - G_2(\btheta) \| \cdot \| \bGamma^{-1}(2\by - G_1(\btheta) - G_2(\btheta) ) \| \pi_0(\btheta)\ \mathrm{d}\btheta \\
&\le \| G_1 - G_2\|_{L^2(\pi_0)} \cdot \|\bGamma^{-1}(2\by - G_1 - G_2 )\|_{L^2(\pi_0)} \, ,
\end{split}
\label{eq:bound2}
\end{equation}
where the last line again follows from the Cauchy-Schwarz inequality on the inner-product space $L^2(\pi_0)$.  The KL divergence can now be bounded using Equation~\eqref{eq:bound2}
\begin{equation}
\begin{split}
	\text{KL}(\pi_1\ ||\ \pi_2) &= \int_{\Theta} \pi_1(\btheta) \log\left( \frac{\pi_1(\btheta)}{\pi_2(\btheta)} \right)\ \mathrm{d}\btheta \\
	&= \int_{\Theta} \pi_1(\btheta) \log \left(  \frac{Z_2 \exp\left( -\frac{1}{2}\|\by - G_1(\btheta)\|_{\bGamma^{-1}}^2 \right)}{Z_1 \exp\left( -\frac{1}{2}\|\by - G_2(\btheta)\|_{\bGamma^{-1}}^2 \right) }  \right) \ \mathrm{d}\btheta \\
	&= \log\left(\frac{Z_2}{Z_1}\right) +  \int_{\Theta} \pi_1(\btheta) \log \left(  \frac{\exp\left( -\frac{1}{2}\|\by - G_1(\btheta)\|_{\bGamma^{-1}}^2 \right)}{ \exp\left( -\frac{1}{2}\|\by - G_2(\btheta)\|_{\bGamma^{-1}}^2 \right) }  \right) \ \mathrm{d}\btheta \\
	&\le \log \left( \frac{Z_2}{Z_1} \right) + \frac{1}{2 Z_1} \int_{\Theta} \left| \| \by - G_1(\btheta)\|_{\bGamma^{-1}}^2 - \| \by - G_2(\btheta)\|_{\bGamma^{-1}}^2  \right| \pi_0(\btheta) \ \mathrm{d}\btheta \\
	&\le \left| \log\left( \frac{Z_2}{Z_1} \right) \right| + \frac{1}{2 Z_1} \| G_1 - G_2\|_{L^2(\pi_0)} \cdot \|\bGamma^{-1}(2\by - G_1 - G_2 )\|_{L^2(\pi_0)} \, ,
\end{split}
\label{eq:bound3}
\end{equation}
where in the second-to-last line we used the fact that $\frac{1}{2}\|\by - G_1(\btheta)\|_{\bGamma^{-1}}^2 \ge 0$ and hence 
\begin{equation}
    \exp\left( -\frac{1}{2}\|\by - G_1(\btheta)\|_{\bGamma^{-1}}^2 \right) \le 1 \, .
\end{equation}
We bound the logarithm of the ratio of the normalizing constants by first bounding the difference of the normalizing constants using the bound in Equation~\eqref{eq:bound2}
\begin{equation}
\begin{split}
	|Z_1 - Z_2| &= \left| \int_{\Theta} \left\{ \exp\left( -\frac{1}{2}\| \by - G_1(\btheta)\|_{\bGamma^{-1}}^2 \right) - \exp\left( -\frac{1}{2}\| \by - G_2(\btheta)\|_{\bGamma^{-1}}^2 \right) \right\} \pi_0(\btheta) \ \mathrm{d}\btheta \right|\\
	&\le \int_{\Theta} \left| \exp\left( -\frac{1}{2}\| \by - G_1(\btheta)\|_{\bGamma^{-1}}^2 \right) - \exp\left( -\frac{1}{2}\| \by - G_2(\btheta)\|_{\bGamma^{-1}}^2 \right) \right| \pi_0(\btheta) \ \mathrm{d}\btheta \\
	&\le \frac{1}{2} \int_{\Theta} \left| \| \by - G_1(\btheta)\|_{\bGamma^{-1}}^2 - \| \by - G_2(\btheta)\|_{\bGamma^{-1}}^2  \right| \pi_0(\btheta) \ \mathrm{d}\btheta \\
	&\le \frac{1}{2} \| G_1 - G_2\|_{L^2(\pi_0)} \cdot \|\bGamma^{-1}(2\by - G_1 - G_2 )\|_{L^2(\pi_0)}\, .
\end{split}
\label{eq:bound4}
\end{equation}
The third line follows from the fact that $|e^{-x} - e^{-y}| \le |x-y|$ for all $x,y\ge0$.  Let $\gamma_{\min} > 0$ denote the smallest eigenvalue of the noise covariance matrix $\bGamma$.  By the triangle inequality
\begin{equation}
\begin{split}
\| \bGamma^{-1}(2\by - G_1 - G_2) \|_{L^2(\pi_0)} 
&\le 2 \|\bGamma^{-1} \by \|_{L^2(\pi_0)} + \| \bGamma^{-1}( G_1 + G_2) \|_{L^2(\pi_0)}  \\
&\le 2 \|\bGamma^{-1} \by \|_{L^2(\pi_0)} + 2\|\bGamma^{-1} G\|_{L^2(\pi_0)} + \| \bGamma^{-1}( G_1 + G_2 - 2G) \|_{L^2(\pi_0)}  \\
&\le 2 \|\bGamma^{-1} \by \|_{L^2(\pi_0)} + 2\|\bGamma^{-1} G\|_{L^2(\pi_0)} \\
&\quad + \frac{1}{\gamma_{\min}}\| G_1 - G \|_{L^2(\pi_0)} +  \frac{1}{\gamma_{\min}}\| G_2 - G \|_{L^2(\pi_0)} \, .
\end{split}
\label{eq:bound5}
\end{equation}
Since $\| G_{\ell} - G \|_{L^2(\pi_0)} \to 0$ by Assumption~\ref{asm:modelerror}, we can bound $\|G_1 - G\|_{L^2(\pi_0)}$ and $\|G_2 - G\|_{L^2(\pi_0)}$ independently of $\ell_1$ and $\ell_2$.  Therefore, there exists a constant $b_1 > 0$ independent of $\ell$ such that
\begin{equation}
\| \bGamma^{-1}(2\by - G_1 - G_2) \|_{L^2(\pi_0)}  \le b_1 .
\label{eq:bound6}
\end{equation}
Combining Equations~\eqref{eq:bound4} and~\eqref{eq:bound6} yields
\begin{equation}
	|Z_1 - Z_2| \le \frac{b_1}{2} \|G_1 - G_2\|_{L^2(\pi_0)} \, .
\label{eq:bound7}
\end{equation}
The ratio of the normalizing constants can be written
\begin{equation}
\left| \frac{Z_2}{Z_1} - 1 \right| = \frac{1}{Z_1} \left| Z_1 - Z_2 \right| \, ,
\label{eq:bound8}
\end{equation}
so the logarithm can be bounded as
\begin{equation}
\left| \log \left( \frac{Z_2}{Z_1} \right) \right| \le \max \left\{ \left| \log\left( 1 - \frac{|Z_2 - Z_1|}{Z_1} \right)  \right|,\  \log\left( 1 + \frac{|Z_2 - Z_1|}{Z_1} \right)   \right\} 
\label{eq:bound9}
\end{equation}
since $x \mapsto |\log x|$ is decreasing on $(0, 1]$ and increasing on $[1,\infty)$.    Combining this with the inequality that $\frac{x}{1+x} \le \log(1 + x) \le x$ for all $x > -1$ gives
\begin{equation}
\begin{split}
\left| \log \left( \frac{Z_2}{Z_1} \right) \right| &\le \max \left\{ \frac{\frac{|Z_2 - Z_1|}{Z_1}}{ 1 - \frac{|Z_2 - Z_1|}{Z_1} } ,\ \frac{|Z_2 - Z_1|}{Z_1} \right\} \le \frac{|Z_1 - Z_2|}{Z_1 - |Z_1 - Z_2|} \, .
\end{split}
\label{eq:bound10}
\end{equation}
Since $Z_{\ell} \to Z \in (0, \infty)$ is a convergent sequence, there exists a constant $b_2 > 0$ such that
\begin{equation}
	Z_1^{-1} \le \sup_{\ell \ge 1} Z_{\ell}^{-1} \le b_2\, .
\label{eq:bound11}
\end{equation} 
Moreover, for all $\ell_1,\ell_2$ sufficiently large $|Z_1 - Z_2| \le b_2^{-1}/2$.  Using the bound gives
\begin{equation}
\left| \log \left( \frac{Z_2}{Z_1} \right) \right| \le \frac{|Z_1 - Z_2|}{b_2^{-1} - |Z_1 - Z_2|}  \le 2 b_2 | Z_1 - Z_2|\, .
\label{eq:bound12}
\end{equation}
Combining Equations~\eqref{eq:bound3},~\eqref{eq:bound6},~\eqref{eq:bound7},~\eqref{eq:bound11}, and~\eqref{eq:bound12} gives
\begin{equation}
	\text{KL}(\pi_1\ ||\ \pi_2) \le \frac{3}{2}b_1 b_2 \|G_1 - G_2\|_{L^2(\pi_0)} \, .
\label{eq:bound13}
\end{equation}
Now set $C = \frac{3}{2}b_1 b_2$ to obtain the result.
\end{proof}

\section{Lemma~\ref{lmma:bipklrate} and proof}
\label{appdx:proof2}

\begin{lemma}
If Assumption~\ref{asm:modelerror} holds with $\psi(\ell) = b_0 s^{-\alpha \ell}$, then Assumptions~\ref{asm:BiasRate},~\ref{asm:MLBiasRate} also hold with the same rate $\alpha$.
\label{lmma:bipklrate}
\end{lemma}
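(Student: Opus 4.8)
The plan is to reduce everything to Lemma~\ref{lmma:klmodelbound}, which already converts a KL divergence between two posteriors into the $L^2(\pi_0)$ distance of the corresponding forward-model approximations. Once that bridge is in place, both rate statements follow by inserting the assumed decay $\psi(\ell) = b_0 s^{-\alpha\ell}$ and, for the multilevel bound, a single application of the triangle inequality in $L^2(\pi_0)$. I would not re-derive any of the normalizing-constant estimates, since those are absorbed into the constant $C$ coming from Lemma~\ref{lmma:klmodelbound}.

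First, for the bias rate in Assumption~\ref{asm:BiasRate}, I would apply Lemma~\ref{lmma:klmodelbound} with $\ell_1 = \ell$ and $\ell_2 = \infty$, using the convention $G_\infty = G$ and $\pi^{(\infty)} = \pi$. This gives
\[
\KL(\pil\ ||\ \pi) \le C\,\|G_\ell - G\|_{L^2(\pi_0)} \le C\,\psi(\ell) = C b_0\, s^{-\alpha\ell},
\]
for all $\ell$ sufficiently large, so that Assumption~\ref{asm:BiasRate} holds with $k_1 = Cb_0$ and the same exponent $\alpha$. For Assumption~\ref{asm:MLBiasRate}, I would instead take $\ell_1 = \ell-1$, $\ell_2 = \ell$ and split the forward-model distance through $G$:
\[
\|G_{\ell-1} - G_\ell\|_{L^2(\pi_0)} \le \|G_{\ell-1} - G\|_{L^2(\pi_0)} + \|G - G_\ell\|_{L^2(\pi_0)} \le \psi(\ell-1) + \psi(\ell) = b_0(1 + s^{\alpha})\, s^{-\alpha\ell}.
\]
Combining with Lemma~\ref{lmma:klmodelbound} yields $\KL(\pi^{(\ell-1)}\ ||\ \pil) \le Cb_0(1+s^\alpha)s^{-\alpha\ell}$, i.e.\ Assumption~\ref{asm:MLBiasRate} with $k_2 = Cb_0(1+s^\alpha)$, matching the constants quoted in Theorem~\ref{thm:BayesCostComp}.

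Two bookkeeping points remain. Lemma~\ref{lmma:klmodelbound} is only stated for indices that are sufficiently large, but this is harmless: the excluded indices form a finite set on which each KL divergence is a fixed finite number, so enlarging $k_1$ and $k_2$ absorbs them while preserving the rate $\alpha$. The genuinely separate piece is the uniform bound $\KL(\mu_0\ ||\ \pil)\le k_0$ in Assumption~\ref{asm:BiasRate}, and I expect this to be the main obstacle: unlike the two rate bounds it cannot be read off from Lemma~\ref{lmma:klmodelbound}, because its first argument is the fixed initial measure $\mu_0$ rather than a posterior, so the $L^2(\pi_0)$ control of the forward models no longer applies directly.

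To deal with $k_0$ I would write $\KL(\mu_0\ ||\ \pil) = \KL(\mu_0\ ||\ \pi) + \log(Z_\ell/Z) + \tfrac12\int \mu_0(\btheta)\big(\|\by - G_\ell(\btheta)\|_{\bGamma^{-1}}^2 - \|\by - G(\btheta)\|_{\bGamma^{-1}}^2\big)\,\mathrm{d}\btheta$, observe that $\log(Z_\ell/Z)\to 0$ (as in the proof of Lemma~\ref{lmma:klmodelbound}) and that the integral term tends to $0$, so that the convergent sequence $\KL(\mu_0\ ||\ \pil) \to \KL(\mu_0\ ||\ \pi)$ is bounded and one may take $k_0 = \sup_{\ell}\KL(\mu_0\ ||\ \pil)$. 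Making the integral term rigorous is the only delicate step: it needs either finiteness of $\KL(\mu_0\ ||\ \pi)$ together with $L^2(\mu_0)$ (rather than $L^2(\pi_0)$) control of $G_\ell - G$, or an envelope bound $\mu_0 \le b\,\pi_0$ analogous to Assumption~\ref{asm:svgdpdfbdd}; I would flag this dependence explicitly rather than leave it implicit.
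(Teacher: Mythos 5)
Your two rate bounds are exactly the paper's proof: apply Lemma~\ref{lmma:klmodelbound} with $\ell_1=\ell$, $\ell_2=\infty$ to get $k_1 = Cb_0$, and with $\ell_1=\ell-1$, $\ell_2=\ell$ plus the triangle inequality in $L^2(\pi_0)$ to get $k_2 = Cb_0(1+s^{\alpha})$; the constants match. Your two bookkeeping points go beyond what the paper writes down, and both are well taken: the paper's proof silently ignores the ``sufficiently large'' restriction in Lemma~\ref{lmma:klmodelbound} (your finite-set absorption handles it), and, more substantively, it never verifies the first half of Assumption~\ref{asm:BiasRate}, namely $\KL(\mu_0\,||\,\pi^{(\ell)})\le k_0$ uniformly in $\ell$. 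You are right that this part does not follow from Lemma~\ref{lmma:klmodelbound}, since $\mu_0$ is not a posterior, and that making your decomposition $\KL(\mu_0\,||\,\pi^{(\ell)}) = \KL(\mu_0\,||\,\pi) + \log(Z_\ell/Z) + \tfrac12\int \mu_0\bigl(\|\by-G_\ell\|_{\bGamma^{-1}}^2 - \|\by-G\|_{\bGamma^{-1}}^2\bigr)$ rigorous requires either $L^2(\mu_0)$ control of $G_\ell - G$ or an envelope bound $\mu_0 \le b\,\pi_0$; neither is among the lemma's stated hypotheses, so flagging this as an additional assumption is the correct call rather than a defect in your argument.
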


\begin{proof}
Let $\ell_1 = \ell$ and $\ell_2 = \infty$, so that by Lemma~\ref{lmma:klmodelbound} in Appendix~\ref{appdx:proof1} we immediately have
\begin{equation}
	\text{KL}(\pi^{(\ell)}\ ||\ \pi) \le C \|G_{\ell} - G\|_{L^2(\pi_0)} \le C \psi(\ell)  = C b_0 s^{-\alpha \ell} ,
\label{eq:bound14}
\end{equation}
so that $k_1 = Cb_0$.  Moreover, setting $\ell_1 = \ell - 1$ and $\ell_2 = \ell$ and using the triangle inequality gives
\begin{equation}
	\text{KL}(\pi^{(\ell - 1)}\ ||\ \pi^{(\ell)}) \le C \|G_{\ell - 1} - G_{\ell} \|_{L^2(\pi_0)} \le C \left( \|G_{\ell - 1} - G\|_{L^2(\pi_0)} + \|G_{\ell } - G\|_{L^2(\pi_0)} \right)\, .
\end{equation}
Thus,
\begin{equation}
	\text{KL}(\pi^{(\ell - 1)}\ ||\ \pi^{(\ell)}) \le C \left( 1 + \frac{\psi(\ell-1)}{\psi(\ell)} \right) \psi(\ell) \le C b_0 \left( 1 + s^{\alpha} \right) s^{-\alpha \ell}\, , 
\end{equation}
so that $k_2 = C b_0 \left( 1 + s^{\alpha} \right)$.
\end{proof}

\section{A triangle-like inequality for the KL divergence}
\label{appdx:kltriangle}

Let $\rho_0, \rho_1, \rho_2$ be three probability distributions on $\Theta$.  We have that
\begin{equation}
\begin{split}
      \KL(\rho_0 \ ||\ \rho_2) &= \int_{\Theta} \rho_0(\btheta)\log\left( \frac{\rho_0(\btheta)}{\rho_2(\btheta)} \right)\ \mathrm{d}\btheta   \\
      &= \int_{\Theta} \rho_0(\btheta) \log\left( \frac{\rho_0(\btheta) \rho_1(\btheta)}{\rho_1(\btheta)\rho_2(\btheta)} \right) \ \mathrm{d}\btheta \\
      &= \int_{\Theta} \rho_0(\btheta)\log\left( \frac{\rho_0(\btheta)}{\rho_1(\btheta)} \right)\ \mathrm{d}\btheta + \int_{\Theta} \rho_0(\btheta)\log\left( \frac{\rho_1(\btheta)}{\rho_2(\btheta)} \right)\ \mathrm{d}\btheta \\
      &= \KL(\rho_0\ ||\ \rho_1) + \KL(\rho_1\ ||\ \rho_2) + \int_{\Theta} (\rho_0(\btheta) - \rho_1(\btheta)) \log \left( \frac{\rho_1(\btheta)}{\rho_2(\btheta)} \right)\ \mathrm{d}\btheta \, ,
\end{split}
\end{equation}
cf.~the inequality given in \citep{MX}. We refer to this third term in the last line as the remainder term.

\section{Details about numerical experiments}
\label{appdx:NumRes}

\subsection{General}
The step size $\delta$ and kernel bandwidth $h$ was chosen via a manual process so that SVGD on the highest level numerically converged. The same $\delta$ and $h$ are used for SVGD and MLSVGD. Time measurements were performed on compute nodes with Intel Xeon CPU E5-2690 v2, restricted to 8 cores and 32GB memory, with a Matlab implementation. The MCMC reference is computed with the delayed-rejection adaptive Metropolis (DRAM) method \citep{haario2001,10.1007/s11222-006-9438-0} on the highest level $L$ of the respective problem. The covariance matrix of the Gaussian proposal is initialized to be diagonal with $10^{-2}$ on the diagonal. The burn-in time is 10,000 samples. Another 20,000 samples are generated and every other sample is then used to compute the MCMC reference mean $\bar{\btheta}$ of the parameter. The error reported in Figure~\ref{fig:NLExp:MeanError}a and Figure~\ref{fig:EB:All}c is $\frac{1}{10}\sum_{i = 1}^{10} \|\bar{\btheta} - \btheta^{(i)}\|_2$, where $\btheta^{(i)}$ is the mean of (ML)SVGD particles of the $i$-th replicate.

\subsection{Diffusion equation with nonlinear reaction term}
We repeat the experiments of Section~\ref{sec:NumRes:DiffReact} with $N \in \{500, 2500, 5000\}$ particles and show the corresponding speedups in Figure~\ref{fig:DiffReactDifferentN}. The speedup of MLSVGD is roughly the same over the different numbers of particles, which is expected because the cost of MLSVGD scales with the number of particles as the cost of SVGD.

\begin{figure}
\scalebox{0.98}{
\begin{tabular}{ccc}
\includegraphics[width=0.33\columnwidth]{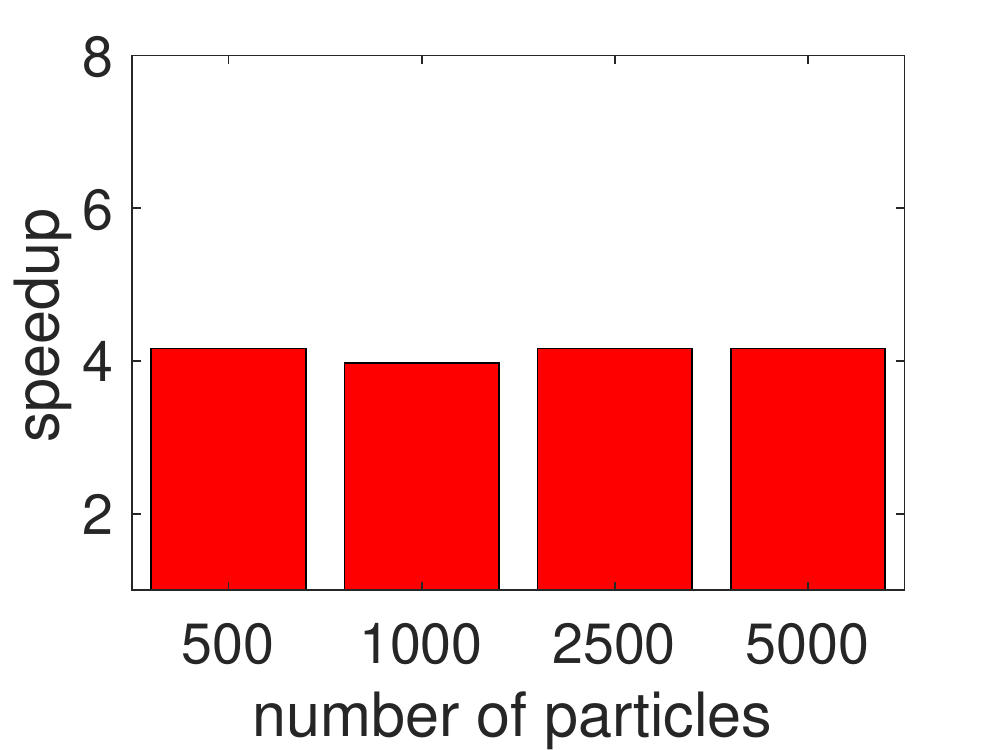}
& \hspace*{-0.5cm}\includegraphics[width=0.33\columnwidth]{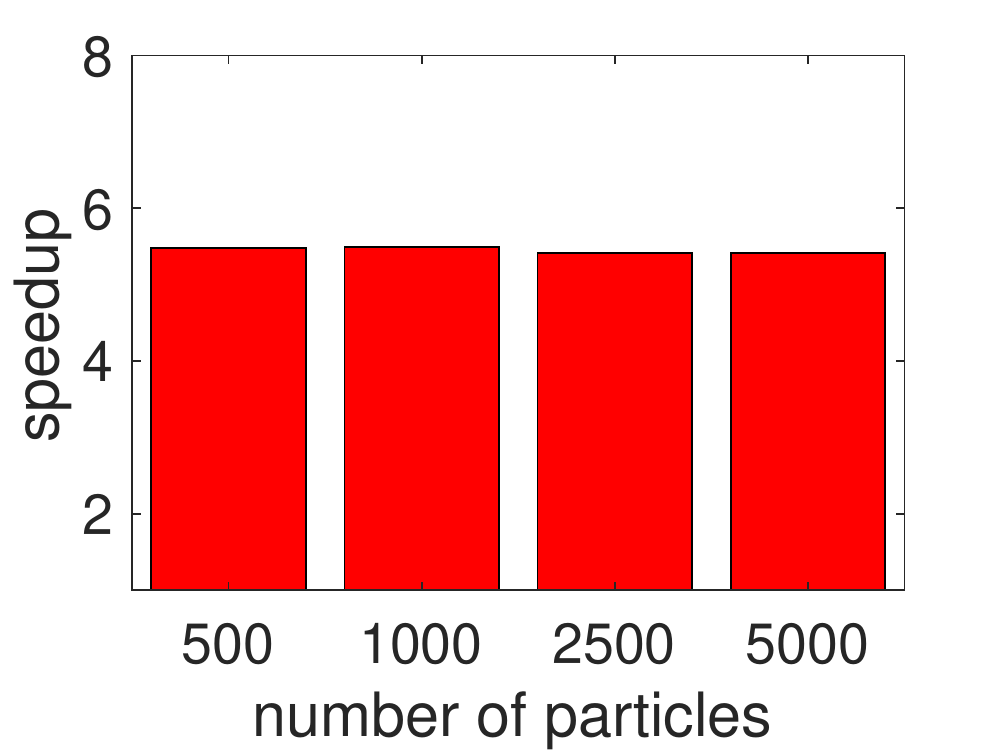}
& \hspace*{-0.5cm}\includegraphics[width=0.33\columnwidth]{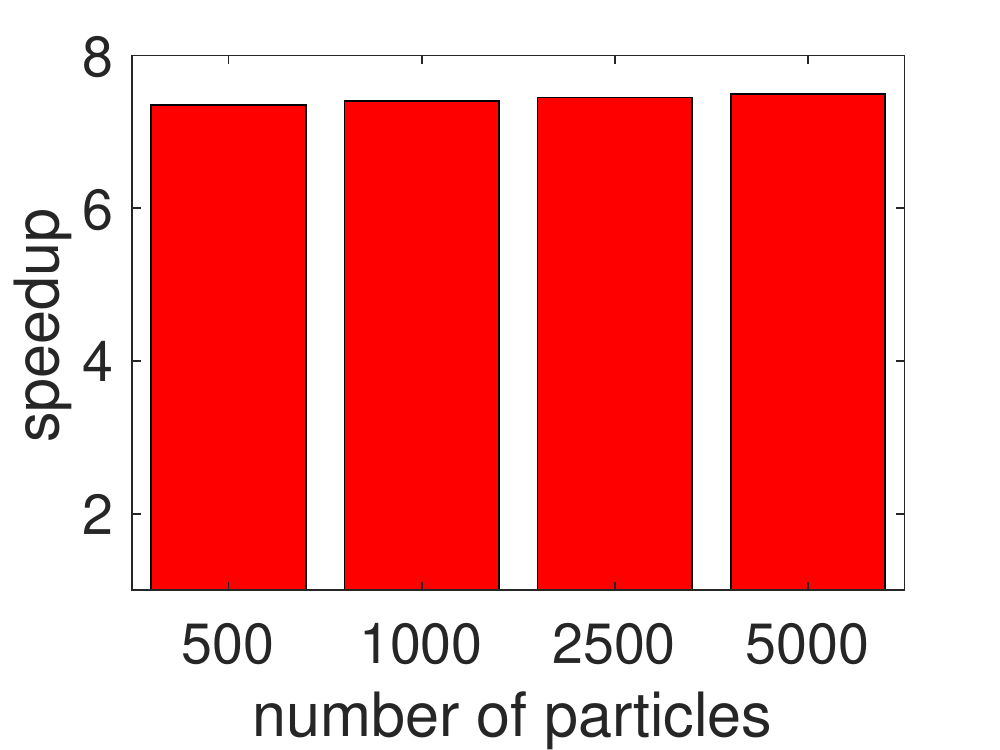}\\
(a) tolerance $\epsilon = 5 \times 10^{-2}$ & (b) tolerance $\epsilon = 10^{-2}$ & (c) tolerance $\epsilon = 10^{-3}$
\end{tabular}
}
\caption{Diffusion-reaction: The cost of MLSVGD scales with the number of particles as the cost of SVGD, which means that the speedups that MLSVGD obtains compared to SVGD in this example remain roughly the same for different number of particles.}
\label{fig:DiffReactDifferentN}
\end{figure}

\subsection{Euler-Bernoulli beam}

In Section \ref{sec:NumRes:EulerBernoulliBeam} we consider the PDE \eqref{eq:euler_bernoulli_equation}
for $\Omega = [0,1]$, where $u : \Omega \to \R$ is the vertical deflection of the beam and $f : \Omega \to \R$ is the load. The effective stiffness of the beam is given by $E : \Omega\to \R$ and describes beam geometry and material properties. The beam is in cantilever configuration, where the left boundary is fixed and the right boundary is free i.e., the boundary conditions are
$$
u(0)= 0 \,, \quad \frac{\partial}{\partial x}u\Big|_{x=0} = 0\,, \quad \frac{\partial^3}{\partial x^3}u\Big|_{x=1} = 0\,, \quad \frac{\partial^3}{\partial x^3}u\Big|_{x=1} = 0\,.
$$
We use the same stiffness $E$ available in the model developed by Matthew Parno for the 2018 Gene Golub SIAM Summer School on ``Inverse Problems: Systematic Integration of Data with Models under Uncertainty.'' The model is available on GitHub.\footnote{ https://github.com/g2s3-2018/labs} 

\paragraph{Forward model} The forward model is derived as follows. Consider the function $I : \R \times \Omega \to \R$ defined as
$$
I(x,\alpha) = \parr*{ 1 + \mathrm{exp}\parr*{-\frac{x-\alpha}{0.005}}}^{-1}\,,
$$
with 
$$
\lim_{x\to-\infty} I(x,\alpha) = 0\,,\quad \lim_{x\to\infty} I(x,\alpha) = 1
$$
such that there is a smooth transition from $0$ to $1$ at $\alpha$. For $k > 1$, let $\alpha_1,\dots,\alpha_{k+1}$ be $k+1$ equidistant points in $\Omega$. Let $\R_+ = \{ z \in \R \st z > 0\}$ and consider the parameter $\btheta = [\theta_1,\dots,\theta_k]^T \in \R^k_+$. Define the function $\hat{E}_i:\Omega\times\R\to\R$ as 
$$
\hat{E}_i(x,\theta_i) = \parr*{1 - I(x,\alpha_i)}\hat{E}_i(x,\theta_{i-1})
$$
Given a parameter $\btheta$, the function $\hat{E}_k$ is a smooth approximation of
the piecewise constant function\\ $\sum_{i=1}^k \theta_i \mathbbm{1}(\alpha_i,\alpha_{i+1}]$, where $\mathbbm{1}(\alpha_i,\alpha_{i+1}]$ is the indicator function of the interval $(\alpha_i,\alpha_{i+1}] \subset \R$. 

\paragraph{Additional plots for $d \in \{3, 6, 12, 16\}$} Figure~\ref{fig:EBD3}--\ref{fig:EBD16} show the analogous results to Figure~\ref{fig:EB:All} for dimension $d \in \{3, 6, 12, 16\}$, respectively. Figure~\ref{fig:EBVariance} shows the analogous results to Figure~\ref{fig:EBVarianceDim9}. The behavior of MLSVGD compared to SVGD is qualitatively the same as for dimension $d = 9$.

\begin{figure}
\scalebox{0.94}{
\begin{tabular}{ccc}
\includegraphics[width=0.33\columnwidth]{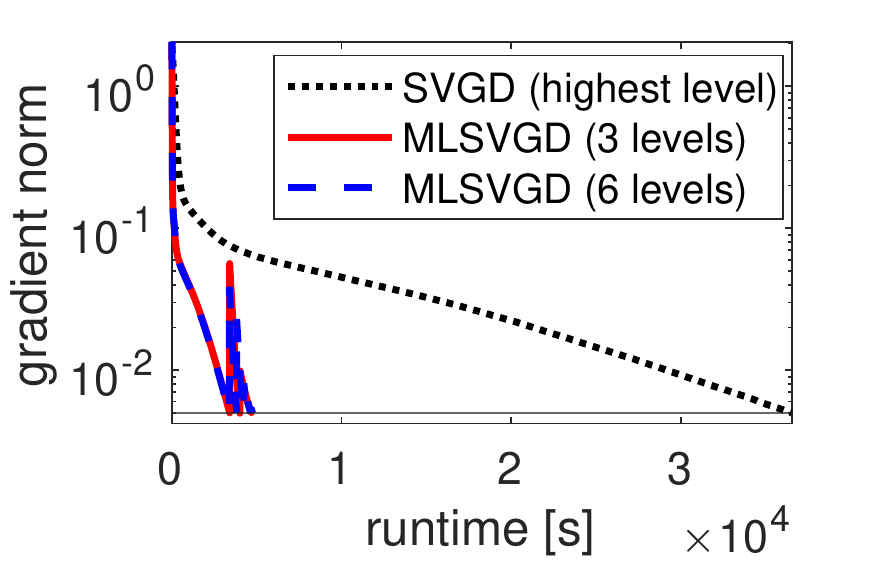} & \hspace*{-0.2cm}\includegraphics[width=0.33\columnwidth]{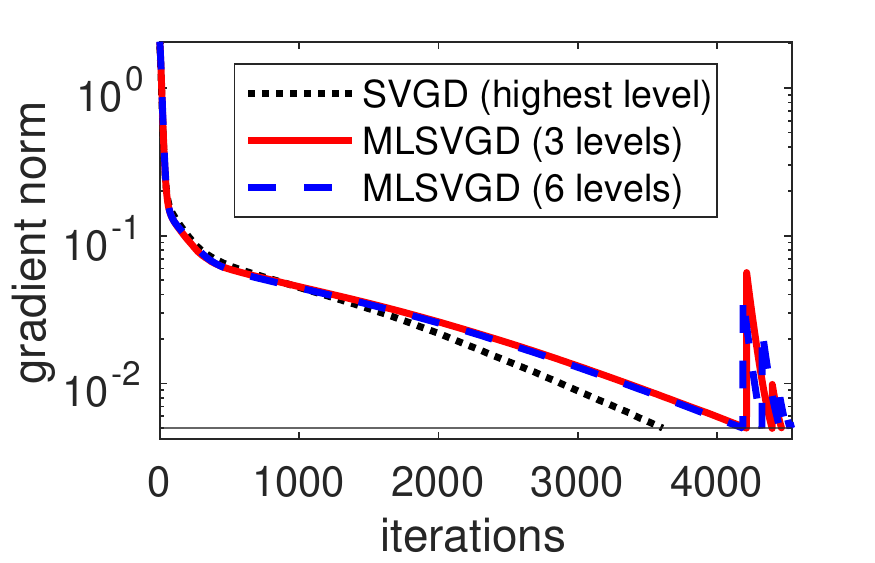} & \hspace*{-0.2cm}\includegraphics[width=0.33\columnwidth]{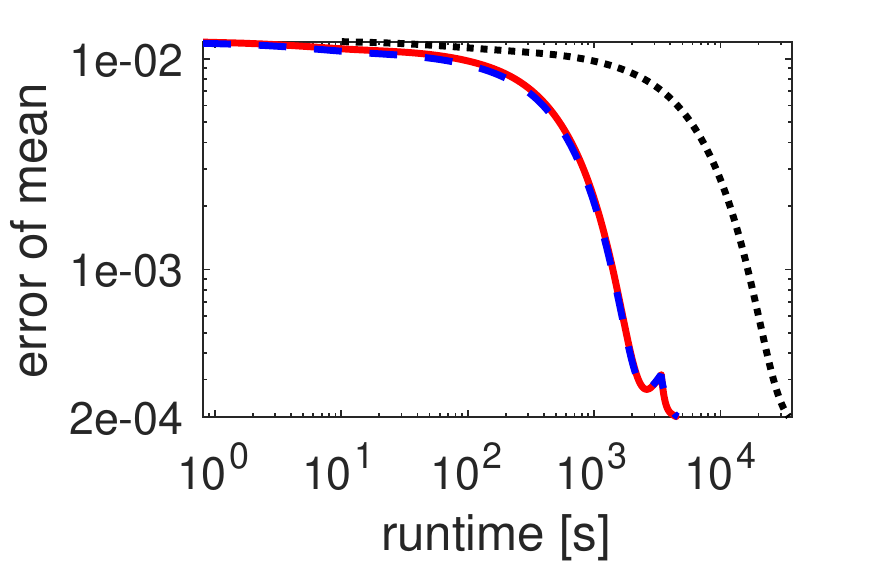}\\
(a) runtime & \hspace*{-0.2cm}(b) iterations & \hspace*{-0.2cm}(c) comparison to MCMC reference
\end{tabular}
}
\caption{Euler-Bernoulli beam: Results of MLSVGD for dimension $d = 3$.}
\label{fig:EBD3}
\end{figure}

\begin{figure}
\scalebox{0.94}{
\begin{tabular}{ccc}
\includegraphics[width=0.33\columnwidth]{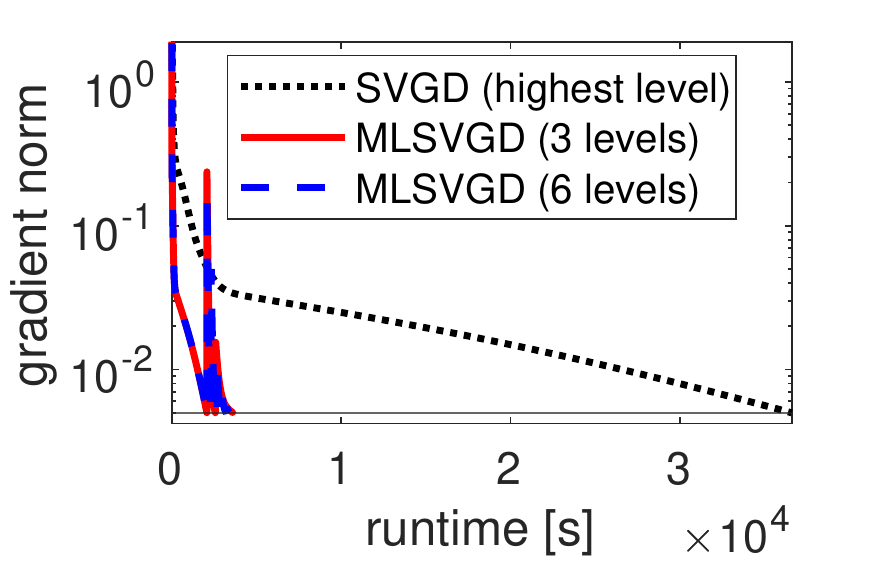} & \hspace*{-0.2cm}\includegraphics[width=0.33\columnwidth]{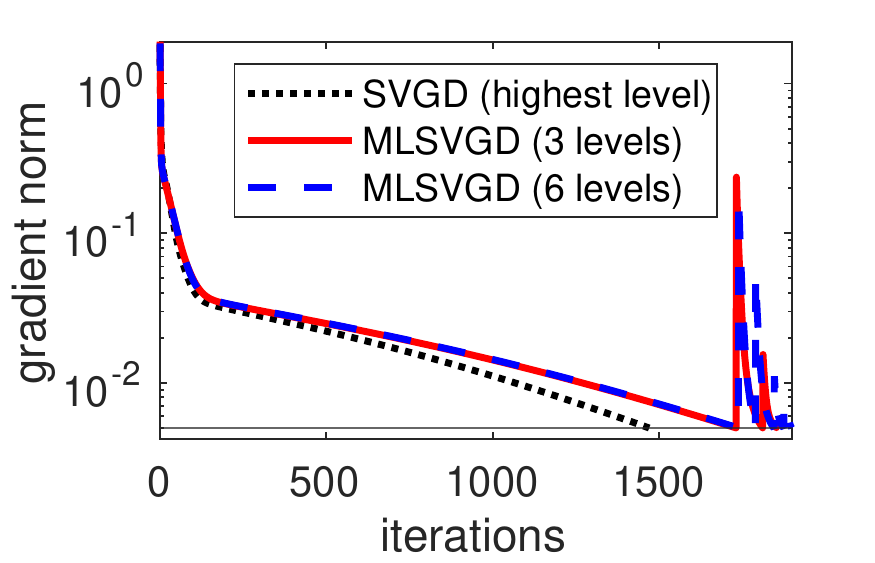} & \hspace*{-0.2cm}\includegraphics[width=0.33\columnwidth]{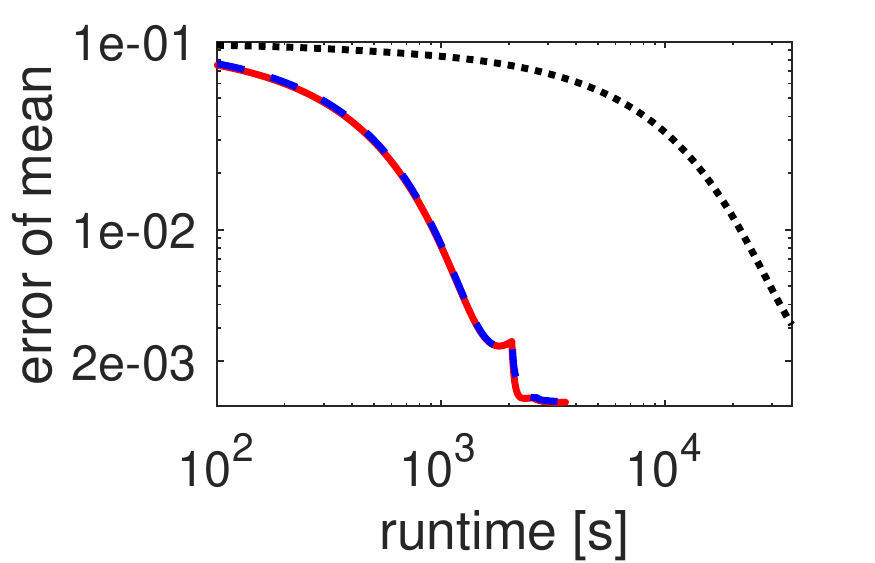}\\
(a) runtime & \hspace*{-0.2cm}(b) iterations & \hspace*{-0.2cm}(c) comparison to MCMC reference
\end{tabular}
}
\caption{Euler-Bernoulli beam: Results of MLSVGD for dimension $d = 6$.}
\label{fig:EBD6}
\end{figure}

\begin{figure}
\scalebox{0.94}{
\begin{tabular}{ccc}
\includegraphics[width=0.33\columnwidth]{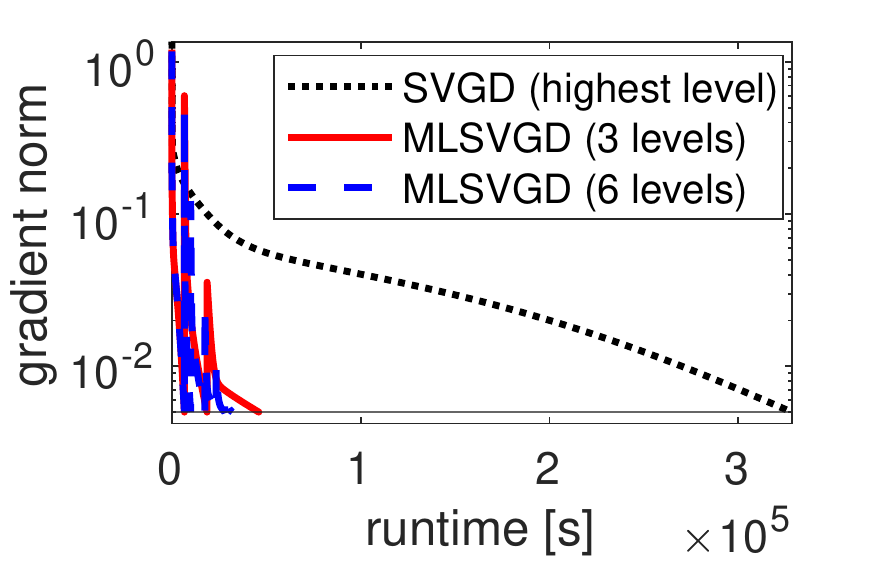} & \hspace*{-0.2cm}\includegraphics[width=0.33\columnwidth]{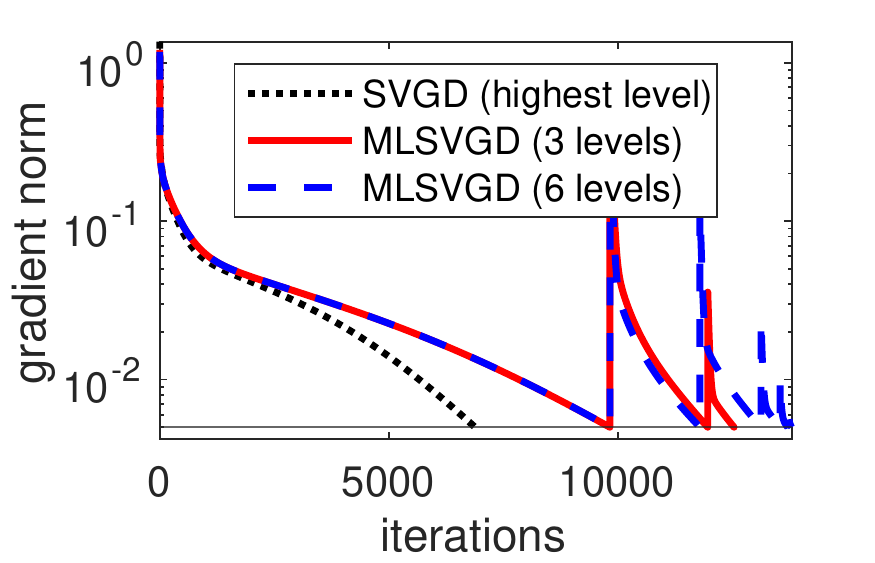} & \hspace*{-0.2cm}\includegraphics[width=0.33\columnwidth]{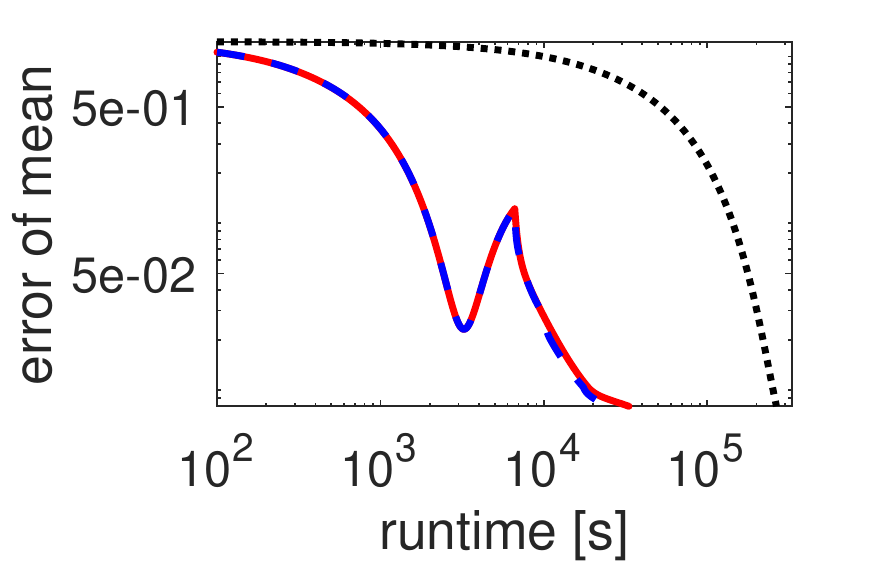}\\
(a) runtime & \hspace*{-0.2cm}(b) iterations & \hspace*{-0.2cm}(c) comparison to MCMC reference
\end{tabular}
}
\caption{Euler-Bernoulli beam: Results of MLSVGD for dimension $d = 12$.}
\label{fig:EBD12}
\end{figure}

\begin{figure}
\scalebox{0.94}{
\begin{tabular}{ccc}
\includegraphics[width=0.33\columnwidth]{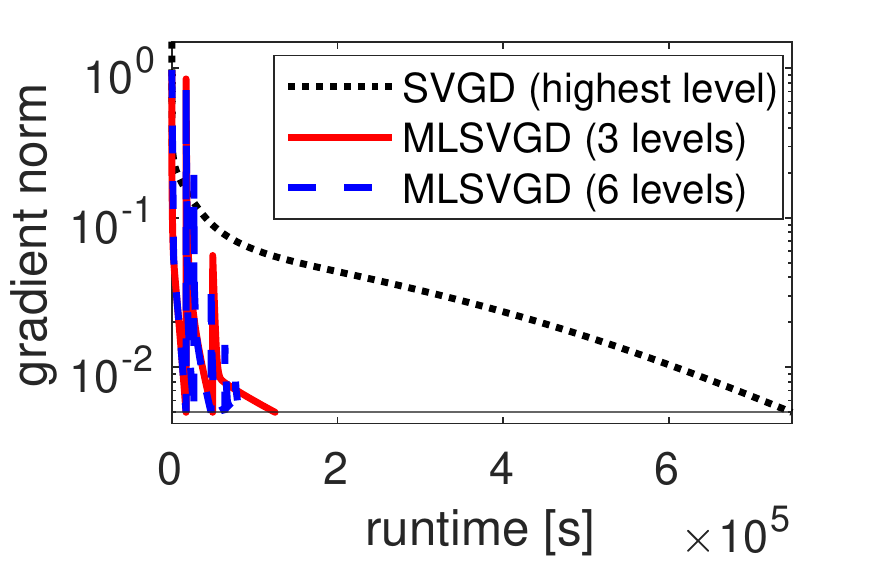} & \hspace*{-0.2cm}\includegraphics[width=0.33\columnwidth]{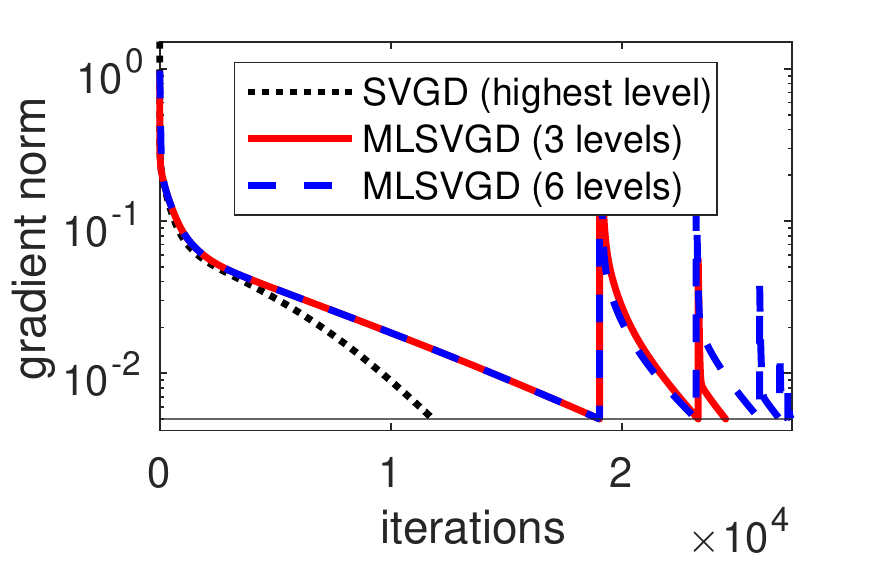} & \hspace*{-0.2cm}\includegraphics[width=0.33\columnwidth]{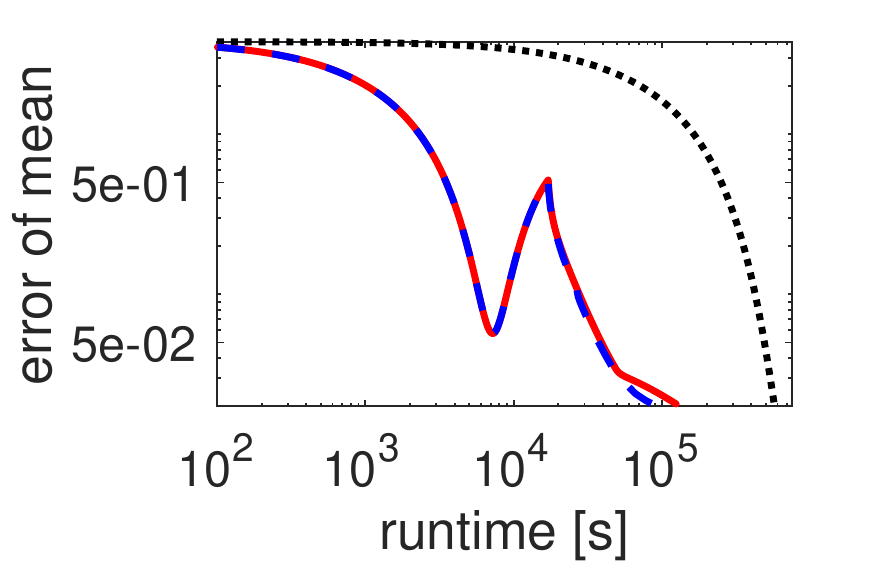}\\
(a) runtime & \hspace*{-0.2cm}(b) iterations & \hspace*{-0.2cm}(c) comparison to MCMC reference
\end{tabular}
}
\caption{Euler-Bernoulli beam: Results of MLSVGD for dimension $d = 16$.}
\label{fig:EBD16}
\end{figure}

\begin{figure}
\scalebox{0.94}{
\begin{tabular}{ccc}
\includegraphics[width=0.33\columnwidth]{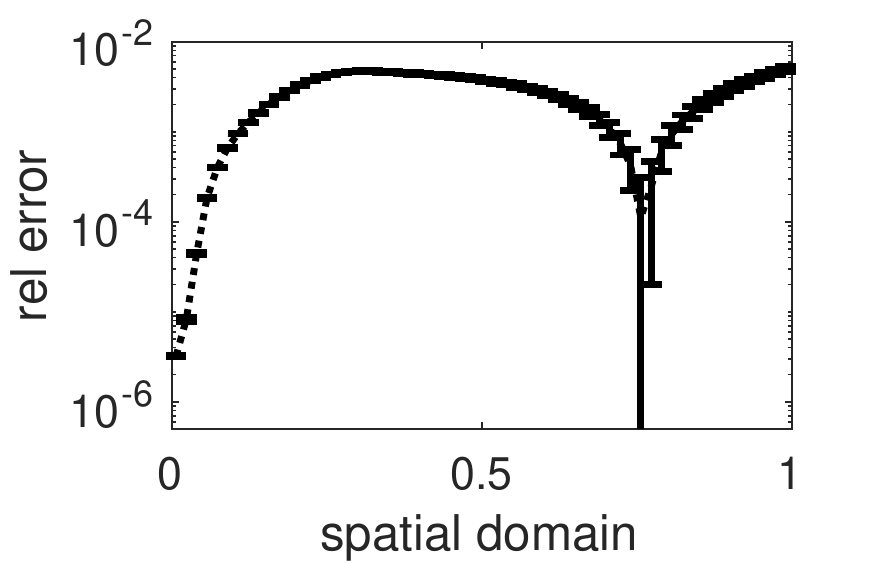} & \hspace*{-0.2cm}\includegraphics[width=0.33\columnwidth]{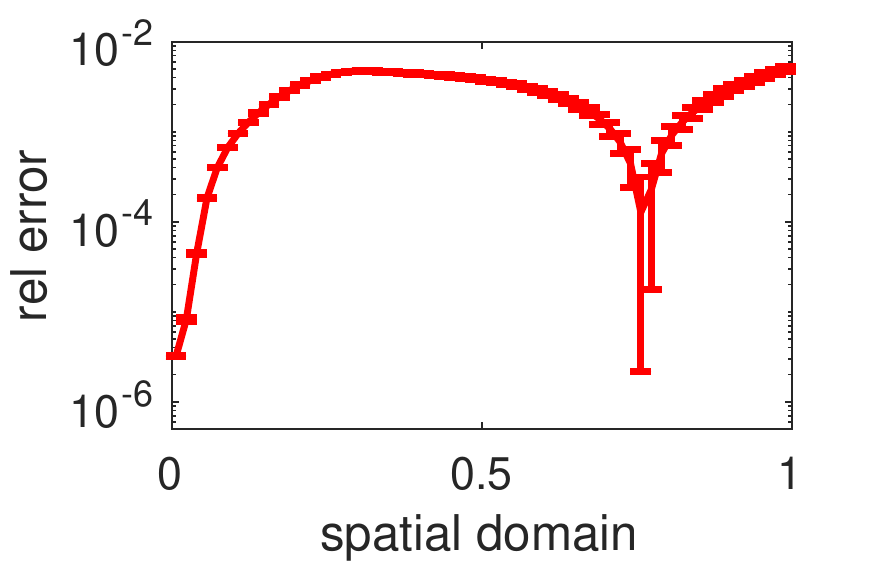} & \hspace*{-0.2cm}\includegraphics[width=0.33\columnwidth]{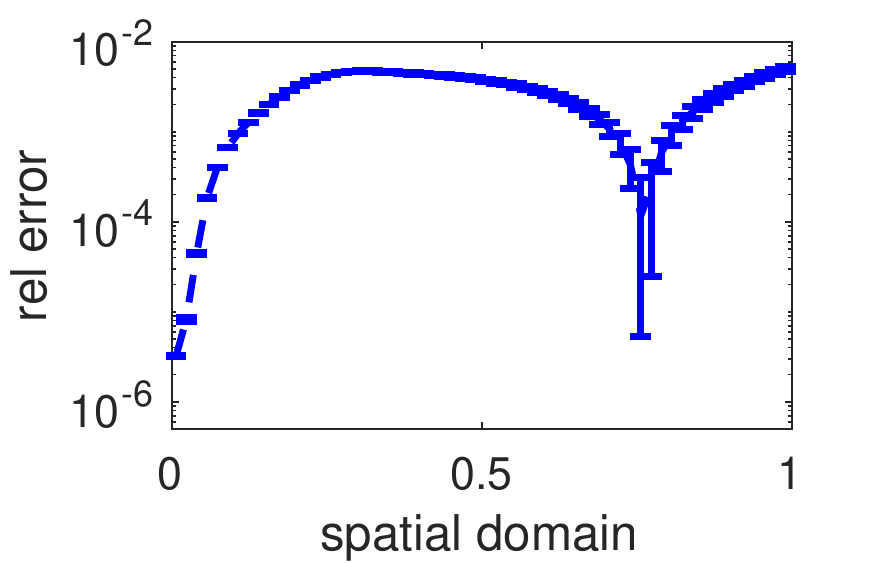}\\
(a) $d = 3$, single-level SVGD & \hspace*{-0.2cm} (b) $d = 3$, MLSVGD (3 levels) & \hspace*{-0.2cm}(c) $d = 3$, MLSVGD (6 levels)\\
\includegraphics[width=0.33\columnwidth]{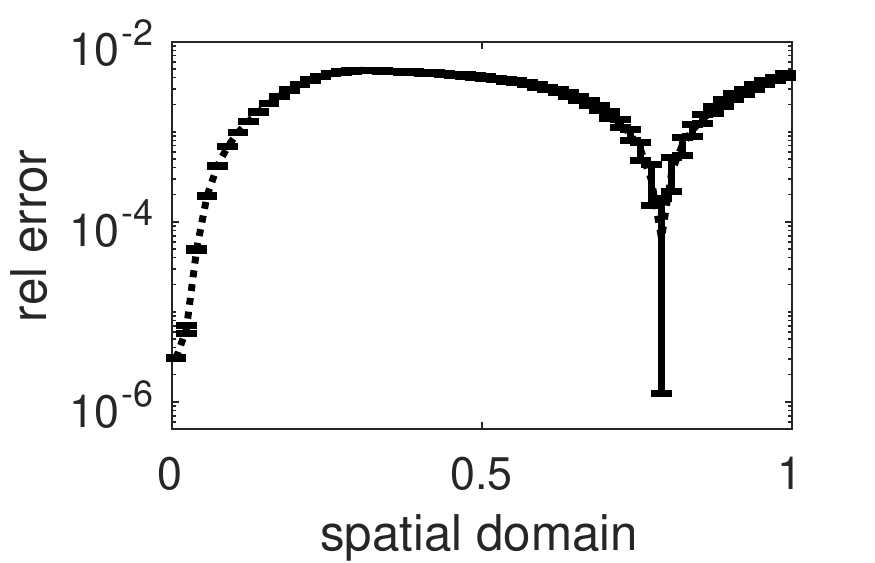} & \hspace*{-0.2cm}\includegraphics[width=0.33\columnwidth]{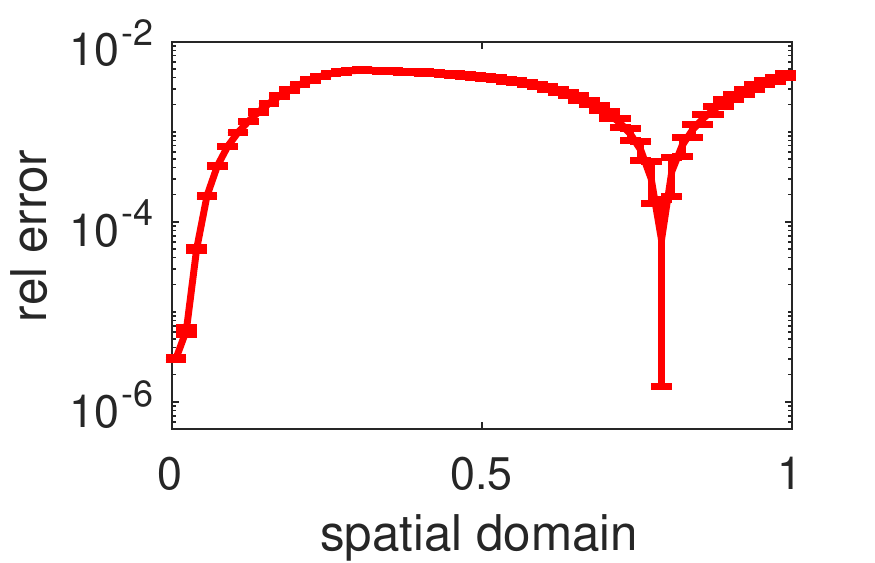} & \hspace*{-0.2cm}\includegraphics[width=0.33\columnwidth]{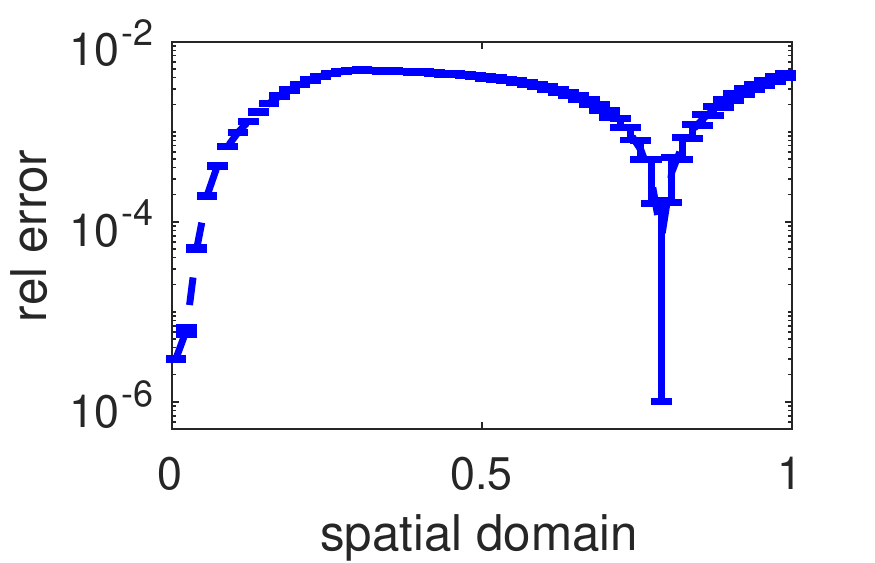}\\
(d) $d = 6$, single-level SVGD & \hspace*{-0.2cm} (e) $d = 6$, MLSVGD (3 levels) & \hspace*{-0.2cm}(f) $d = 6$, MLSVGD (6 levels)\\
\includegraphics[width=0.33\columnwidth]{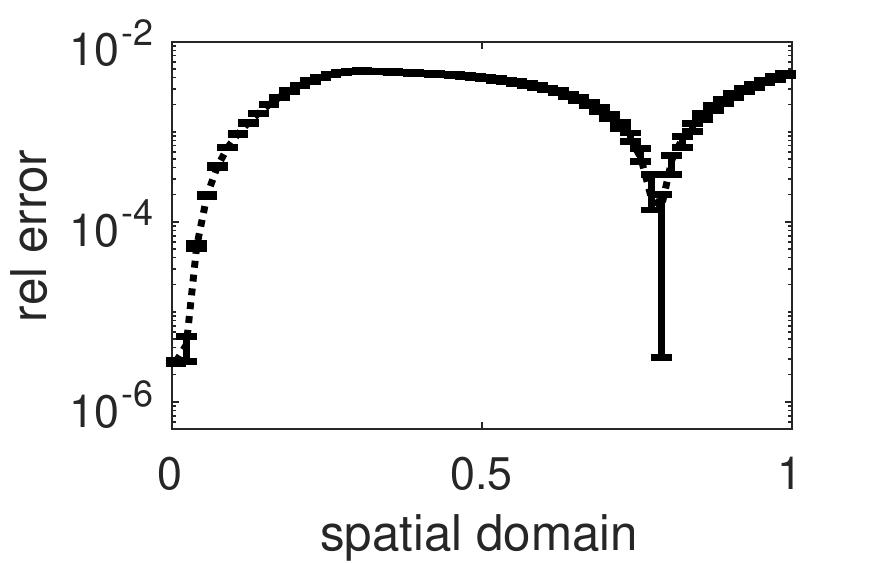} & \hspace*{-0.2cm}\includegraphics[width=0.33\columnwidth]{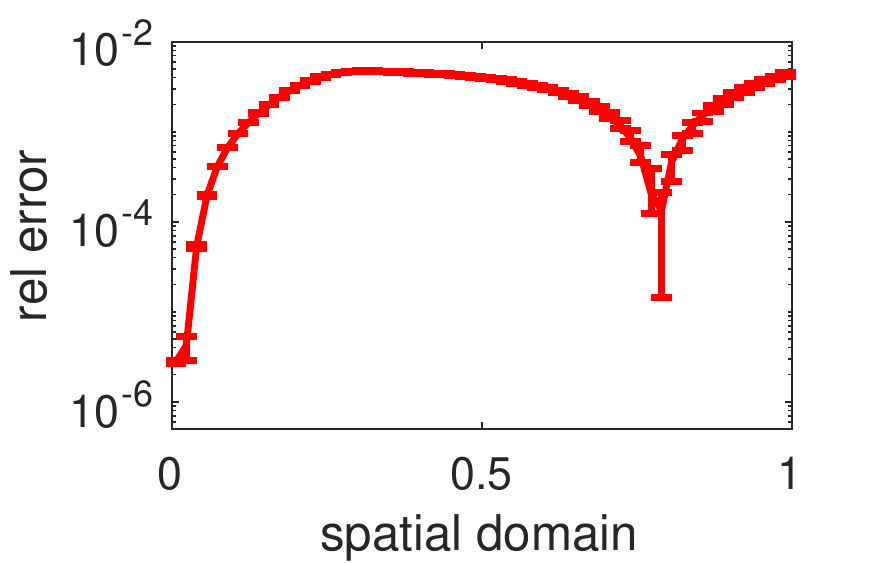} & \hspace*{-0.2cm}\includegraphics[width=0.33\columnwidth]{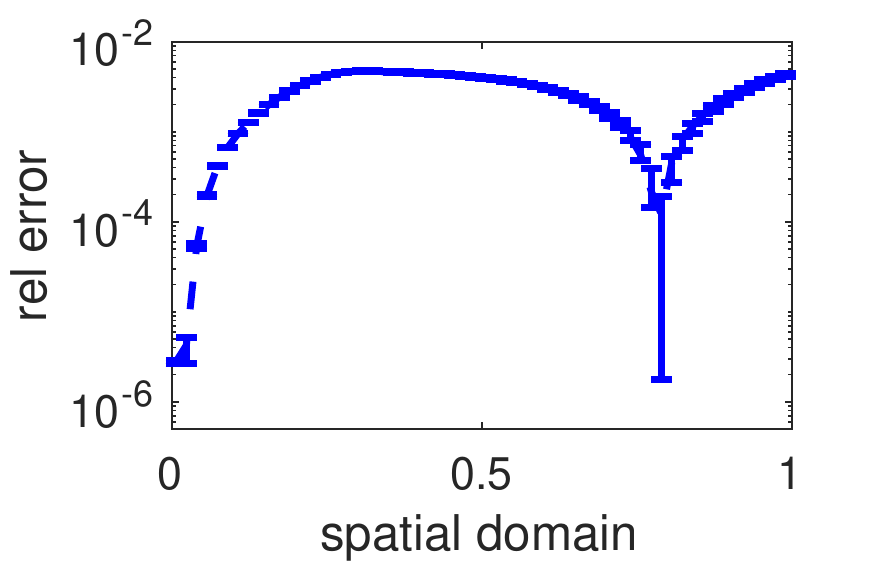}\\
(g) $d = 12$, single-level SVGD & \hspace*{-0.2cm} (h) $d = 12$, MLSVGD (3 levels) & \hspace*{-0.2cm}(i) $d = 12$, MLSVGD (6 levels)\\
\includegraphics[width=0.33\columnwidth]{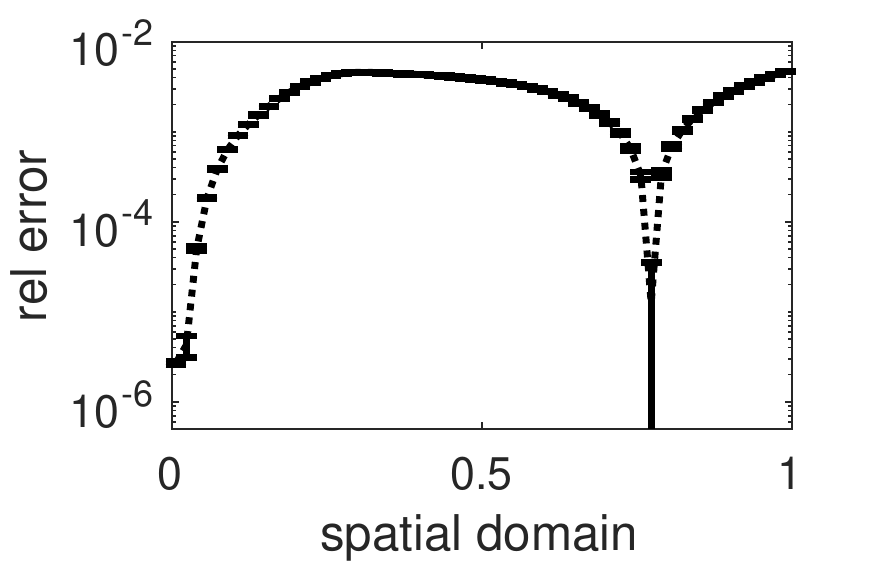} & \hspace*{-0.2cm}\includegraphics[width=0.33\columnwidth]{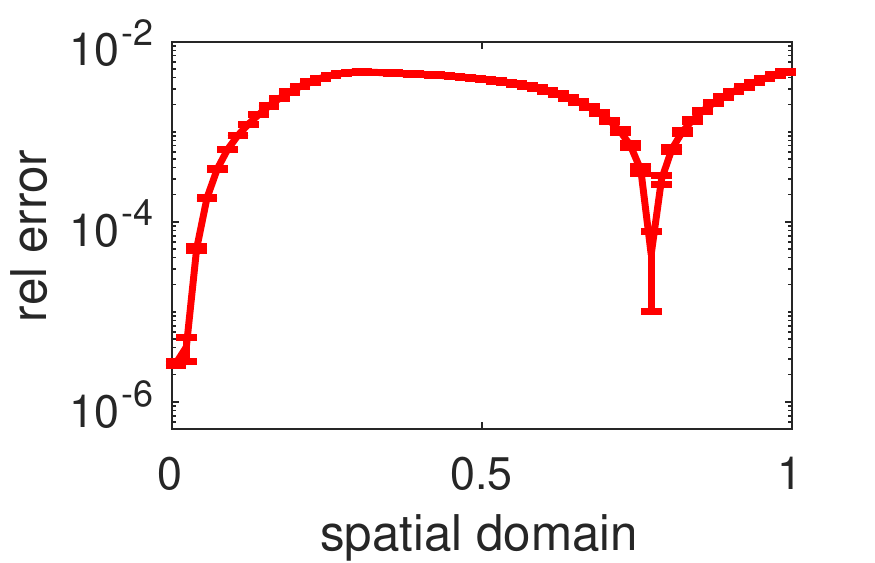} & \hspace*{-0.2cm}\includegraphics[width=0.33\columnwidth]{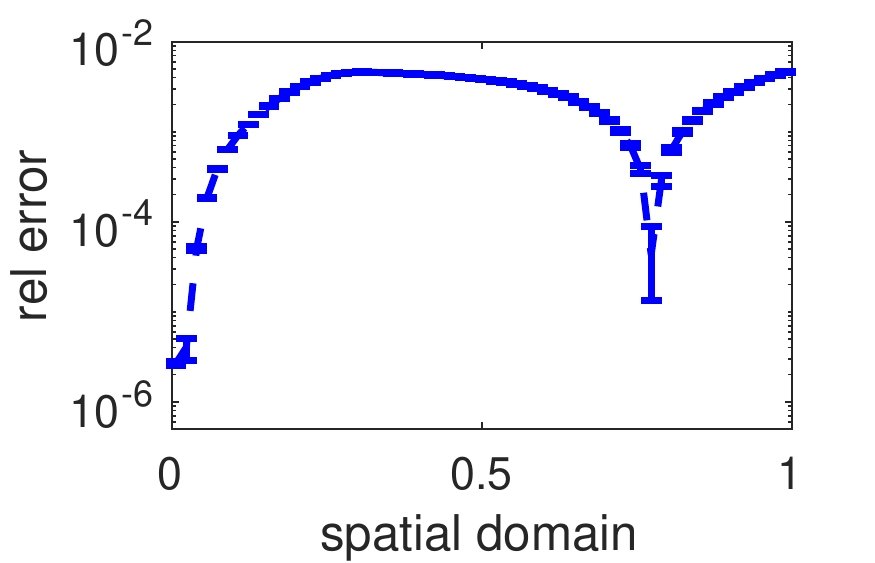}\\
(j) $d = 16$, single-level SVGD & \hspace*{-0.2cm} (k) $d = 16$, MLSVGD (3 levels) & \hspace*{-0.2cm}(l) $d = 16$, MLSVGD (6 levels)\\
\end{tabular}
}
\caption{Euler-Bernoulli beam: Minimum and maximum of pointwise error over ensemble of inferred solutions for $d \in \{3, 6, 12, 16\}$.}
\label{fig:EBVariance}
\end{figure}

\end{document}